\numberwithin{equation}{section}
\newtheorem{theorem}{Theorem}[section]
\newtheorem{lemma}[theorem]{Lemma}
\theoremstyle{definition}
\theoremstyle{remark}
\newtheorem{remark}[theorem]{Remark}
\title[Solvability of the B\'ezout Equation for  \( H^\infty \) Functions on the Polydisk]{Solvability of the B\'ezout Equation for Banach Algebra--Valued \( H^\infty \) Functions on the Polydisk}
\author[A.~Brudnyi]{Alexander Brudnyi}
\address{Department of Mathematics and Statistics, University of Calgary, Calgary, Alberta, Canada}
\email{abrudnyi@ucalgary.ca}
\author[M.~Withanachchi]{Mahishanka Withanachchi}
\address{Department of Mathematics and Statistics, University of Calgary, Calgary, Alberta, Canada}
\email{mahishanka.withanach@ucalgary.ca}
\subjclass[2020]{Primary 46J15, 30H50; Secondary 32A38, 46J10}
\keywords{Corona problem; B\'ezout equation; bounded holomorphic functions; Banach-valued holomorphic functions; maximal ideal space; Gleason parts; slice algebra; polydisk}
\date{}
\begin{document}

\begin{abstract}
In connection with the still unsolved multidimensional corona problem for algebras of bounded holomorphic functions on convex domains, we study the solvability of the B\'ezout equation for the algebra of bounded holomorphic functions on the polydisk with values in a complex Banach algebra. Assuming local solvability of the B\'ezout equation on a special open cover of the maximal ideal space of the algebra, we combine a dimension-induction scheme with a careful analysis of the topological structure of this space to glue local solutions into a global one. As a corollary, we obtain the solvability of the B\'ezout equation for a broader class of subalgebras containing the slice algebra of bounded holomorphic functions, the case of the latter having been previously proved by the first author.
\end{abstract}

\maketitle

\section{Introduction}

Let \(A\) be a unital commutative complex Banach algebra. Its maximal ideal space 
\(M(A)\subset A^*\) is the set of nonzero homomorphisms \(A\to\mathbb C\). Every element of \(M(A)\) 
is a continuous linear functional on \(A\) of norm at most one, so \(M(A)\) lies in the closed unit ball of \(A^*\). 
It is a compact Hausdorff space in the weak-\(\ast\) topology of \(A^*\). 
The Gelfand transform
\[
\hat a(\xi):=\xi(a), \qquad a\in A,\ \xi\in M(A),
\]
is a norm nonincreasing morphism of Banach algebras \(A\to C(M(A))\). 
If \(A\subset C_b(X)\), the algebra of bounded continuous functions on a Hausdorff space \(X\) with the supremum norm, 
and \(A\) separates points of \(X\), then the Gelfand transform is an isometric embedding. 
In this case, the evaluation map
\[
\iota:X\hookrightarrow M(A), \qquad \iota(x)(f)=f(x),\ f\in A,
\]
is continuous, and the complement of the closure of \(\iota(X)\) in \(M(A)\) is called the \emph{corona} of \(A\).  

The \emph{corona problem} asks whether this complement is empty.
Equivalently (cf.~Garnett \cite[Ch.~V, Thm.~1.8]{Garnett1981}), it asks whether for every finite collection
\(f_1,\dots,f_k\in A\) satisfying the corona condition
\begin{equation}\label{e1}
\inf_{x\in X}\ \max_{1\le j\le k} |f_j(x)|>0,
\end{equation}
the corresponding Bézout equation is solvable, that is, whether there exist
\(g_1,\dots,g_k\in A\) such that
\begin{equation}\label{e2}
f_1 g_1+\cdots+f_k g_k=1 \qquad \text{on } X.
\end{equation}

\smallskip

For \(A=H^\infty(\mathbb D)\), the algebra of bounded holomorphic functions on the unit disk,  
the problem was raised by Kakutani in 1941. Newman \cite{Newman1959} introduced the term \emph{corona},  
and showed that the problem is equivalent to an interpolation question later solved by Carleson \cite{Carleson1962}.  
Carleson proved that the corona is empty, equivalently that every collection of bounded holomorphic functions on \(\mathbb D\) satisfying \eqref{e1} admits a solution to \eqref{e2}.  
Hörmander \cite{Hormander1967} introduced \(\bar\partial\)-methods together with the Koszul complex to simplify the proof,  
and Wolff \cite{Wolff1980} later gave the shortest argument by directly obtaining uniform estimates on the boundary circle of the solution of the relevant \(\bar\partial\)-equation (see also Garnett \cite{Garnett1981}).

Following Carleson’s theorem, the corona problem was settled for many classes of Riemann surfaces: 
finite bordered surfaces \cite{Alling1964,Alling1965,EarleMarden1968,Forelli1966,St1,St2,St3,HaraNakay1985,Oh2008}, 
certain infinitely connected domains (``roadrunner'' domains) by Behrens \cite{Behrens1970,Behrens1971} (extended in \cite{Deeb1977a,DeebWilken1977,Narita1985}), 
and finitely sheeted coverings by Nakai \cite{Nakay1983}. 
It was later shown, starting with the work of Cole (see Gamelin \cite{Gamelin1978}), 
that there exist Riemann surfaces with nontrivial corona 
(see also Barrett--Diller \cite{BarrettDiller1998}, Lárusson \cite{Larusson2000}).

For planar domains the general corona problem is still unsolved. In this direction, Gamelin \cite{Gamelin1970} has shown that the corona problem for planar domains is local in the sense that it depends only on the behavior of the domain locally about each boundary point.
Moore \cite{Moore1987} proved that the corona is empty for domains whose boundary lies on the graph of a \(C^{1+\varepsilon}\) function, 
extending the earlier result of Garnett--Jones \cite{GarnettJones1985} for Denjoy domains. 
Further extensions include Handy’s theorem for complements of certain Cantor sets \cite{Handy2009} and NewDelman’s result for subsets of Lipschitz graphs \cite{NewDelman2011}.  

The problem also remains open in several variables for the ball and the polydisk. 
In fact, no examples are known of domains in \(\mathbb C^n\), \(n\ge2\), whose algebra \(H^\infty\) has a nontrivial corona.

\section{Main Results}\label{main}

In this paper we study the corona problem for the Banach algebra \( H^\infty(\mathbb{D}^n) \) of bounded holomorphic functions on the polydisk \( \mathbb{D}^n \subset \mathbb{C}^n \). As before, \( M(H^\infty(\mathbb{D}^n)) \) denotes its maximal ideal space, equipped with the weak-* topology.

Given a unital commutative complex Banach algebra \( (A, \lVert\cdot\rVert_A) \), we write \( H^\infty(\mathbb{D}^n, A) \) for the Banach algebra of bounded holomorphic functions from \( \mathbb{D}^n \) into \( A \), equipped with the norm
\begin{equation}\label{e2.1}
\|f\|_\infty = \sup_{z \in \mathbb{D}^n} \|f(z)\|_A.
\end{equation}

One of our main results is the following:

\begin{theorem}\label{te2.1}
Let \( A \) be a unital commutative complex Banach algebra, and let \( f_1, \dots, f_k \in H^\infty(\mathbb{D}^n, A) \). Suppose there exist open sets \( \widehat{U}_1, \dots, \widehat{U}_k \subset [M(H^\infty(\mathbb{D}))]^n \) such that
\[
[M(H^\infty(\mathbb{D}))]^n = \bigcup_{i=1}^k \widehat{U}_i, \quad \text{and} \quad U_i := \mathbb{D}^n \cap \widehat{U}_i.
\]
Assume that for each \( i \), there exist \( g_{i1}, \dots, g_{ik} \in H^\infty(U_i, A) \) such that
\[
\sum_{j=1}^k f_j g_{ij} = 1 \quad \text{on } U_i.
\]
Then there exist \( g_1, \dots, g_k \in H^\infty(\mathbb{D}^n, A) \) such that
\[
\sum_{j=1}^k f_j g_j = 1 \quad \text{on } \mathbb{D}^n.
\]
\end{theorem}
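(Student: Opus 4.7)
The plan is to proceed by induction on the dimension $n$, exploiting the canonical identification
\[
H^\infty(\mathbb{D}^n, A) \;\cong\; H^\infty\bigl(\mathbb{D},\, H^\infty(\mathbb{D}^{n-1},A)\bigr),
\]
valid for any unital commutative complex Banach algebra $A$ via a Hartogs-type theorem for vector-valued holomorphic functions. This reduces an $n$-dimensional $A$-valued B\'ezout problem to a $1$-dimensional $B$-valued one, where $B := H^\infty(\mathbb{D}^{n-1},A)$ is again a unital commutative complex Banach algebra. The base case $n=1$ will be the analytic heart of the argument.

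For the inductive step, assuming the statement for $n-1$, I would fix $\xi_0 \in M(H^\infty(\mathbb{D}))$ and look at the compact slice $\{\xi_0\} \times [M(H^\infty(\mathbb{D}))]^{n-1}$. Since the $\widehat{U}_i$ cover this slice and the product topology has a base of ``rectangles'', a tube-neighborhood plus finite-subcover argument produces an open neighborhood $\widehat{W}$ of $\xi_0$ in $M(H^\infty(\mathbb{D}))$ and open sets $\widehat{V}_1,\dots,\widehat{V}_N$ in $[M(H^\infty(\mathbb{D}))]^{n-1}$ such that
\[
\widehat{W}\times [M(H^\infty(\mathbb{D}))]^{n-1} \;=\; \bigcup_{\ell=1}^{N} \widehat{W}\times\widehat{V}_\ell
\]
with each $\widehat{W}\times\widehat{V}_\ell$ contained in some $\widehat{U}_{\sigma(\ell)}$. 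Setting $W = \mathbb{D}\cap\widehat{W}$ and $V_\ell = \mathbb{D}^{n-1}\cap\widehat{V}_\ell$, the restrictions of the given $g_{ij}$ to $W\times V_\ell$ provide, for every fixed $z\in W$, the hypothesis of the $(n-1)$-case on $\mathbb{D}^{n-1}$ with algebra $A$ and cover $\{\widehat{V}_\ell\}$. The induction hypothesis then furnishes a global $A$-valued solution on each slice $\{z\}\times\mathbb{D}^{n-1}$; the crucial technical point is to ensure that these slice solutions can be chosen to depend holomorphically and boundedly on $z\in W$, so as to assemble into an element of $H^\infty(W,B)$. Covering $M(H^\infty(\mathbb{D}))$ by finitely many such $\widehat{W}$'s by compactness then produces exactly the hypothesis of the theorem at $n=1$ for the $B$-valued problem.

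For the base case $n=1$, the differences $g_{ij}-g_{\ell j}$ on overlaps $U_i \cap U_\ell$ satisfy $\sum_j f_j(g_{ij}-g_{\ell j}) = 0$, so they form a \v{C}ech $1$-cocycle with values in the sheaf of $A$-valued $H^\infty$-solutions of the homogeneous B\'ezout equation (which by the Koszul complex is locally generated by the trivial relations $f_\ell\,\mathbf{e}_j - f_j\,\mathbf{e}_\ell$). Globalizing the local solutions is equivalent to this cocycle being a coboundary, i.e.\ to the vanishing of the corresponding $\check H^1$ on the cover $\{\widehat{U}_i\}$. I would deduce this from Oka-principle-type results on $M(H^\infty(\mathbb{D}))$ developed in the first author's earlier work, which rest on the fine topological structure of the maximal ideal space (Hoffman's analytic-disk foliation of nontrivial Gleason parts, Su\'arez's covering-dimension theorem) together with standard $\bar\partial$-techniques with uniform $H^\infty$ bounds on $\mathbb{D}$.

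I expect the principal obstacle to be the holomorphic-in-parameter step in the induction: guaranteeing that the slice-by-slice $(n-1)$-dimensional solutions can be chosen jointly bounded and holomorphic in the transverse variable, rather than forming merely a pointwise family. This probably forces the induction to be carried in a quantitative form, with explicit norm bounds on the solutions in terms of the data and the cover, and then to be upgraded in $z$ via a Banach-valued Cauchy/Hartogs argument so as to produce a bona fide element of $H^\infty(W,B)$ to which the base case can then be applied.
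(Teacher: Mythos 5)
Your high-level plan --- induction on $n$ driven by the canonical identification $H^\infty(\mathbb D^n,A)\cong H^\infty(\mathbb D,\,H^\infty(\mathbb D^{n-1},A))$, with the base case $n=1$ supplied by the first author's earlier Banach-valued corona theorem (the paper invokes \cite[Thm.~8.3]{Brudnyi2023}) --- is exactly the paper's strategy. But the execution of the inductive step, as written, has a genuine gap. You propose to restrict the local data to $W\times V_\ell$, freeze $z\in W$, and apply the $(n-1)$-dimensional statement with scalar algebra $A$ to each slice $\{z\}\times\mathbb D^{n-1}$ separately. This produces an unstructured family of slice solutions with no canonical dependence on $z$: the $(n-1)$-case gives existence, not a selection, so there is no reason for any choice to be even measurable in $z$, let alone holomorphic. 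Your proposed remedy --- quantitative norm bounds plus a ``Banach-valued Cauchy/Hartogs argument'' --- does not close this: uniform boundedness of a $z$-indexed family does not make it holomorphic, and Hartogs-type theorems upgrade \emph{separate} holomorphy to joint holomorphy, which presupposes the very $z$-holomorphy you lack.

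The paper avoids this entirely by never passing to individual slices. The point of the Banach-algebra-valued formulation is that the transverse variables are absorbed into the coefficient algebra \emph{before} the lower-dimensional result is invoked. Concretely, in your notation, the restriction of the local data to $W\times V_\ell$ should be viewed not as a $z$-parametrized family of $A$-valued problems on $V_\ell$, but as a single $(n-1)$-dimensional $B'$-valued problem on $V_\ell$ with $B':=H^\infty(W,A)$, by regarding $g_{ij}|_{W\times V_\ell}$ as an element of $H^\infty(V_\ell,\,H^\infty(W,A))$. Applying the inductive hypothesis over $\mathbb D^{n-1}$ with this algebra $B'$ returns a global solution in $H^\infty(\mathbb D^{n-1},\,H^\infty(W,A))=H^\infty(W\times\mathbb D^{n-1},A)$, so boundedness and holomorphy in $z\in W$ are automatic, not something to be engineered afterwards. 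With this correction your outline closes: you then have local $H^\infty(\mathbb D^{n-1},A)$-valued solutions over a finite cover $(W_\alpha)$ of $\mathbb D$ lifted from a cover of $M(H^\infty(\mathbb D))$, and the base case applied with $B=H^\infty(\mathbb D^{n-1},A)$ finishes the argument. (The paper carries out the same two moves in the opposite order --- it first collapses the last coordinate via the $n=1$ case with algebra $H^\infty(V_j,A)$, and then collapses the first $n$ coordinates via the inductive hypothesis with algebra $H^\infty(\mathbb D,A)$ --- but either ordering works once the slice-by-slice step is replaced by the algebra-valued one.)
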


\smallskip

Next, we study the structure of the maximal ideal space \( M(H^\infty(\mathbb{D}^n)) \). To this end, we consider the natural continuous projection
\begin{equation}\label{e2.2}
\pi_n : M(H^\infty(\mathbb{D}^n)) \longrightarrow \bigl[ M(H^\infty(\mathbb{D})) \bigr]^n,
\end{equation}
defined as the transpose of the canonical isometric embedding
\[
\widehat{\otimes}_\varepsilon^n H^\infty(\mathbb{D}) \hookrightarrow H^\infty(\mathbb{D}^n),
\]
where \( \widehat{\otimes}_\varepsilon^n H^\infty(\mathbb{D}) \) denotes the \( n \)-fold injective (or \(\varepsilon\)-) tensor product of \( H^\infty(\mathbb{D}) \), regarded as the closed subalgebra of \( H^\infty(\mathbb{D}^n) \) generated by bounded holomorphic functions in the individual coordinate variables. In particular, \( \pi_n \) restricts to the identity on \( \mathbb{D}^n \subset M(H^\infty(\mathbb{D}^n)) \).

For each point \( \xi \in \bigl[ M(H^\infty(\mathbb{D})) \bigr]^n \), define the corresponding fiber
\begin{equation}\label{e2.3}
F_\xi := \pi_n^{-1}(\xi) \cap \operatorname{cl}(\mathbb{D}^n) \subset M(H^\infty(\mathbb{D}^n)),
\end{equation}
where \( \operatorname{cl}(\mathbb{D}^n) \) denotes the closure of \( \mathbb{D}^n \) in \( M(H^\infty(\mathbb{D}^n)) \).

We equip \( \operatorname{cl}(\mathbb{D}^n) \) with the topology induced from \( M(H^\infty(\mathbb{D}^n)) \). Given an open subset \( U \subset \operatorname{cl}(\mathbb{D}^n) \), we write \( \mathcal{O}(U) \subset C(U) \) for the algebra of continuous functions on \( U \) that are holomorphic on \( U \cap \mathbb{D}^n \) in the usual sense.

For each fiber \( F_\xi \), we define the associated uniform algebra \( A(F_\xi) \subset C(F_\xi) \) as the uniform closure of the algebras \( \mathcal{O}(U)\rvert_{F_\xi} \), where \( U \) ranges over all open neighborhoods of \( F_\xi \) in \( \operatorname{cl}(\mathbb{D}^n) \).

Since \( F_\xi \) is compact, the family of sets of the form
\[
\pi_n^{-1}(V) \cap \operatorname{cl}(\mathbb{D}^n),
\]
with \( V \) ranging over open neighborhoods of \( \xi \in \bigl[ M(H^\infty(\mathbb{D})) \bigr]^n \), forms a basis of neighborhoods of \( F_\xi \) in \( \operatorname{cl}(\mathbb{D}^n) \); that is, every open neighborhood of \( F_\xi \) contains a set of this form.
Moreover, each such set \( V \) contains a compact subset of the form \( W = W_1 \times \dots \times W_n \), where, for \( \xi = (\xi_1, \dots, \xi_n) \in \bigl[ M(H^\infty(\mathbb{D})) \bigr]^n \), each component \( W_i \subset M(H^\infty(\mathbb{D})) \) is given by
\[
W_i := \left\{ \eta \in M(H^\infty(\mathbb{D})) : |\hat{f}(\eta)| \le 1 \right\},
\]
for some function \( f \in H^\infty(\mathbb{D}) \) satisfying \( \hat{f}(\xi_i) = 0 \). Such a choice is guaranteed by the classical result of Suárez \cite[Thm.~2.4]{Suarez1994}, which asserts that the algebra \( H^\infty(\mathbb{D}) \) is {\em separating}.

In particular, the set
\[
W \cap \mathbb{D}^n = (W_1 \cap \mathbb{D}) \times \dots \times (W_n \cap \mathbb{D})
\]
satisfies the hypotheses of the Runge-type approximation theorem established in~\cite{Brudnyi2026}. That result asserts that every function in \( H^\infty\bigl( \mathbb{D}^n \cap \pi_n^{-1}(V) \bigr) \) can be uniformly approximated on \( W \cap \mathbb{D}^n \) by functions from \( H^\infty(\mathbb{D}^n) \).

Summarizing the above, we obtain that for every open neighborhood \( U \subset \operatorname{cl}(\mathbb{D}^n) \) of \( F_\xi \), the algebra \( \mathcal{O}(U)\rvert_{F_\xi} \) is contained in the uniform closure of the algebra \( \mathcal{O}\bigl( M(H^\infty(\mathbb{D}^n)) \bigr)\rvert_{F_\xi} \), where
\[
\mathcal{O}\bigl( M(H^\infty(\mathbb{D}^n)) \bigr) := \widehat{H^\infty(\mathbb{D}^n)} = \left\{ \hat{f} : f \in H^\infty(\mathbb{D}^n) \right\}.
\]
In view of the definition of the algebra \( A(F_\xi) \), it follows that \( A(F_\xi) \) coincides with the uniform closure of the algebra  \( \widehat{H^\infty(\mathbb{D}^n)}\rvert_{F_\xi} \). It then follows from a standard result in the theory of uniform algebras that the maximal ideal space of \( A(F_\xi) \) coincides with the \( H^\infty \)-hull of \( F_\xi \), namely
\begin{equation}\label{e2.4}
M(A(F_\xi)) 
   = \left\{ \zeta \in M(H^\infty(\mathbb{D}^n)) : 
      |\hat{f}(\zeta)| \le \sup_{F_\xi} |\hat{f}| 
      \ \text{for all } f \in H^\infty(\mathbb{D}^n) \right\}.
\end{equation}

\begin{theorem}\label{teo2.2}
For each \( \xi \in \left[ M(H^\infty(\mathbb{D})) \right]^n \), we have
\[
\pi_n^{-1}(\xi) = M(A(F_\xi)).
\]

The corona problem for \( H^\infty(\mathbb{D}^n) \) is solvable {\rm (}i.e.,
\( \operatorname{cl}(\mathbb{D}^n) = M(H^\infty(\mathbb{D}^n)) \){\rm )}
if and only if
\[
M(A(F_\xi)) = F_\xi \quad \text{for all } \xi \in \left[ M(H^\infty(\mathbb{D})) \right]^n.
\]
\end{theorem}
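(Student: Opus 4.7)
The plan is to prove the equality $\pi_n^{-1}(\xi) = M(A(F_\xi))$ by establishing both inclusions separately, and then derive the corona characterization as an immediate set-theoretic consequence. Throughout I use the description \eqref{e2.4} of $M(A(F_\xi))$ as the set $\{\zeta \in M(H^\infty(\mathbb{D}^n)) : |\hat{f}(\zeta)| \leq \sup_{F_\xi}|\hat{f}| \text{ for all } f \in H^\infty(\mathbb{D}^n)\}$.

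For the inclusion $M(A(F_\xi)) \subseteq \pi_n^{-1}(\xi)$ I would proceed directly. Take $\zeta \in M(A(F_\xi))$. For any $g \in H^\infty(\mathbb{D})$ regarded, via the tensor-product embedding, as a function of the $i$-th coordinate in $H^\infty(\mathbb{D}^n)$, the Gelfand transform $\hat{g}$ is constant on $F_\xi \subseteq \pi_n^{-1}(\xi)$ with value $\xi_i(g)$. Applying the inequality of \eqref{e2.4} to $g - \xi_i(g) \cdot 1$ forces $\zeta(g) = \xi_i(g)$, so $\pi_n(\zeta) = \xi$.

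The reverse inclusion $\pi_n^{-1}(\xi) \subseteq M(A(F_\xi))$ is the substantive direction, equivalent to the sup principle
\[
\sup_{\pi_n^{-1}(\xi)}|\hat{f}| \;=\; \sup_{F_\xi}|\hat{f}|, \qquad f \in H^\infty(\mathbb{D}^n),
\]
saying that $F_\xi$ is a boundary for $\widehat{H^\infty(\mathbb{D}^n)}|_{\pi_n^{-1}(\xi)}$. Fix $\zeta \in \pi_n^{-1}(\xi)$, $f \in H^\infty(\mathbb{D}^n)$, and set $c := \sup_{F_\xi}|\hat{f}|$. For every $\varepsilon > 0$, continuity of $\hat{f}$ together with the basis-of-neighborhoods description of $F_\xi$ in $\operatorname{cl}(\mathbb{D}^n)$ from the preamble yields an open $V \ni \xi$ in $[M(H^\infty(\mathbb{D}))]^n$ with $|\hat{f}| < c + \varepsilon$ on $\pi_n^{-1}(V) \cap \operatorname{cl}(\mathbb{D}^n)$; in particular $\|f|_{V \cap \mathbb{D}^n}\|_\infty \leq c + \varepsilon$. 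The main obstacle is to convert this bound on $V \cap \mathbb{D}^n \subseteq \operatorname{cl}(\mathbb{D}^n)$ into a bound at $\zeta$, which may lie outside $\operatorname{cl}(\mathbb{D}^n)$. My approach is to use the Runge-type theorem of \cite{Brudnyi2026}: on the compact product $W = W_1 \times \cdots \times W_n \subset V$ from the preamble, it identifies the uniform closures in $C_b(W \cap \mathbb{D}^n)$ of $H^\infty(\mathbb{D}^n)|_{W \cap \mathbb{D}^n}$ and $H^\infty(\mathbb{D}^n \cap \pi_n^{-1}(V))|_{W \cap \mathbb{D}^n}$, and I would combine this with standard uniform-algebra duality to lift $\zeta$ to a character $\tilde{\zeta}$ of $H^\infty(\mathbb{D}^n \cap \pi_n^{-1}(V))$ extending $\zeta$ along the restriction map. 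Producing this lift is the technically hardest step of the argument. Once the lift is in hand, $|\hat{f}(\zeta)| = |\tilde{\zeta}(f|_{V \cap \mathbb{D}^n})| \leq \|f|_{V \cap \mathbb{D}^n}\|_\infty \leq c + \varepsilon$, and letting $\varepsilon \to 0$ gives $|\hat{f}(\zeta)| \leq c$.

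Granting the first part, the corona characterization follows by a short set-theoretic computation. If the corona is empty, $\operatorname{cl}(\mathbb{D}^n) = M(H^\infty(\mathbb{D}^n))$, so $F_\xi = \pi_n^{-1}(\xi)$ for every $\xi$ and hence $M(A(F_\xi)) = F_\xi$ by the first part. Conversely, if $M(A(F_\xi)) = F_\xi$ for every $\xi$, then $\pi_n^{-1}(\xi) = F_\xi \subseteq \operatorname{cl}(\mathbb{D}^n)$; since $\pi_n$ is surjective by Carleson's theorem applied to each factor, $M(H^\infty(\mathbb{D}^n)) = \bigcup_\xi \pi_n^{-1}(\xi) \subseteq \operatorname{cl}(\mathbb{D}^n)$.
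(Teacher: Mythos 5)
Your first inclusion $M(A(F_\xi))\subseteq\pi_n^{-1}(\xi)$ and the derivation of the corona characterization from the set equality are both correct and essentially match the paper. The problem is the reverse inclusion, which you rightly identify as the substantive direction; the argument you sketch there has a genuine gap at exactly the step you flag.

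The ``lift'' you want --- a character $\tilde\zeta$ of $H^\infty(\mathbb D^n\cap\pi_n^{-1}(V))$ extending $\zeta$ along restriction --- cannot be produced by ``standard uniform-algebra duality.'' The Runge theorem identifies the uniform closures in $C_b(W\cap\mathbb D^n)$ of $H^\infty(\mathbb D^n)|_{W\cap\mathbb D^n}$ and $H^\infty(\mathbb D^n\cap\pi_n^{-1}(V))|_{W\cap\mathbb D^n}$; call the common closure $\mathcal A$. To push $\zeta$ to a character of $\mathcal A$ (and thence, by the other restriction map, to a character of $H^\infty(\mathbb D^n\cap\pi_n^{-1}(V))$), you need $\zeta$ to factor through $H^\infty(\mathbb D^n)\to\mathcal A$, which requires the a priori bound $|\hat f(\zeta)|\le\|f\|_{W\cap\mathbb D^n}$ for all $f\in H^\infty(\mathbb D^n)$; equivalently, $\zeta$ must lie in the $H^\infty$-hull of $W\cap\mathbb D^n$. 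But that is essentially the conclusion of the theorem (up to $\varepsilon$), so the argument is circular. Duality lets you pull characters back along dense-image homomorphisms, not push them forward, unless you already know the target point lies in the relevant hull.

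The paper avoids this circularity entirely by arguing by contradiction and invoking Theorem~\ref{te2.1} as the hard input: assuming $\zeta\in\pi_n^{-1}(\xi)\setminus M(A(F_\xi))$, one manufactures $g\in H^\infty(\mathbb D^n)$ with $\hat g(\zeta)=0$ but $|\hat g|$ bounded below on $F_\xi$, adjoins finitely many $\varepsilon$-tensor functions $g_1,\dots,g_k$ vanishing on $F_\xi$ whose Gelfand transforms (being fiberwise constant) control the complement of a neighborhood of $\xi$, and checks that $(g,g_1,\dots,g_k)$ satisfies the hypotheses of Theorem~\ref{te2.1} on a suitable finite cover pulled back from $[M(H^\infty(\mathbb D))]^n$. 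The resulting global B\'ezout identity, restricted to $\pi_n^{-1}(\xi)$, forces $\hat f_0\hat g\equiv1$ there, contradicting $\hat g(\zeta)=0$. So the local-to-global B\'ezout machinery is not optional in this architecture --- it is precisely what replaces the ``lift'' your proposal needs but cannot supply.
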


To describe certain fibers \( F_\xi \) more explicitly, we begin by recalling classical results of K.~Hoffman on the structure of the maximal ideal space \( M(H^\infty(\mathbb{D})) \).

The pseudohyperbolic metric on the unit disk \( \mathbb{D} \) is given by
\begin{equation}\label{e2.5}
\rho(z, w) := \left| \frac{z - w}{1 - \bar{w} z} \right|, \quad z, w \in \mathbb{D},
\end{equation}
and admits a natural extension to \( M(H^\infty(\mathbb{D})) \) via the formula
\begin{equation}\label{e2.6}
\rho(\xi, \eta) := \sup \left\{ |\hat{f}(\eta)| : f \in H^\infty(\mathbb{D}),\ \hat{f}(\xi) = 0,\ \lVert f \rVert_\infty \le 1 \right\}, \quad \xi, \eta \in M(H^\infty(\mathbb{D})).
\end{equation}

The \emph{Gleason part} of a point \( x \in M(H^\infty(\mathbb{D})) \) is 
\begin{equation}\label{e2.7}
 \pi(\xi) := \left\{ \eta \in M(H^\infty(\mathbb{D})) : \rho(\xi, \eta) < 1 \right\}. 
 \end{equation} 
 These parts form a partition of \( M(H^\infty(\mathbb{D})) \): for any \( \xi, \eta \), either \( \pi(\xi) = \pi(\eta) \) or \( \pi(\xi) \cap \pi(\eta) = \emptyset \).
\smallskip

A fundamental result of Hoffman \cite{Hoffman1967} asserts that each Gleason part in \( M(H^\infty(\mathbb{D})) \) is either a singleton or an analytic disk (i.e., the image of a continuous injective map \( \tau\colon \mathbb{D} \to M(H^\infty(\mathbb{D})) \) such that \( \hat{f} \circ \tau \in H^\infty(\mathbb{D}) \) for all \( f \in H^\infty(\mathbb{D}) \)). A point \( \xi \in M(H^\infty(\mathbb{D})) \) lies in a nontrivial part if and only if it belongs to the closure of an interpolating sequence. The union \( M_a \) of all such parts is open in the maximal ideal space, and its complement \( M_s := M(H^\infty(\mathbb{D})) \setminus M_a \) is closed and totally disconnected by a theorem of Suárez~\cite{Suarez1996}.

\smallskip

The space \( [M(H^\infty(\mathbb{D}))]^n \) admits a natural stratification according to the number of coordinates lying in the open subset \( M_a \) of nontrivial Gleason parts. Each stratum is determined by the subset of indices \( j \in \{1,\dots,n\} \) for which the \( j \)-th coordinate belongs to \( M_a \) (and hence the remaining coordinates lie in \( M_s \)).

\smallskip

We now describe the structure of fibers of the map
\[
\pi_n: M(H^\infty(\mathbb{D}^n)) \to [M(H^\infty(\mathbb{D}))]^n
\]
over the first two strata: (i) the open stratum \( M_a^n \), and (ii) the codimension-one stratum consisting of those \( \xi \in [M(H^\infty(\mathbb{D}))]^n \) with exactly one coordinate in \( M_s \) and the remaining \( n-1 \) in \( M_a \).

\begin{theorem}\label{teo2.3}
Let \( \xi \in \bigl[M\bigl(H^\infty(\mathbb{D})\bigr)\bigr]^n \). Then:

\begin{itemize}
\item[(i)] If \( \xi \in M_a^n \), then the fiber \( \pi_n^{-1}(\xi) \) coincides with 
\[
F_\xi := \pi_n^{-1}(\xi) \cap \operatorname{cl}(\mathbb{D}^n),
\]
and \( F_\xi \) lies in the closure of an interpolating sequence for \( H^\infty(\mathbb{D}^n) \).\footnote{A sequence \( \{z_k\}\subset\mathbb{D}^n \) is interpolating for \(H^\infty(\mathbb{D}^n)\) if for every bounded sequence \( \{a_k\}\in\ell^\infty \) there exists \( f\in H^\infty(\mathbb{D}^n) \) such that \( f(z_k)=a_k \) for all \( k \).} In particular, \(F_\xi\) is totally disconnected.

\item[(ii)] If exactly one coordinate of \( \xi \) lies in \( M_s \) and the remaining \( n-1 \) in \( M_a \), then the fiber \( \pi_n^{-1}(\xi) \) has covering dimension \( \le 2 \) and decomposes as the disjoint union of a compact totally disconnected subset and a (possibly empty) open subset consisting of pairwise disjoint analytic disks.
\end{itemize}

\noindent In both cases, \( M\bigl(A(F_\xi)\bigr) = F_\xi \).
\end{theorem}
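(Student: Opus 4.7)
The plan is to identify the fiber $\pi_n^{-1}(\xi)$ explicitly in each stratum via interpolating sequences and Hoffman-type parametrizations of nontrivial Gleason parts, simultaneously establishing $\pi_n^{-1}(\xi)=F_\xi$ and the asserted topological structure. The final equality $M(A(F_\xi))=F_\xi$ is then immediate from Theorem~\ref{teo2.2}, which already supplies $M(A(F_\xi))=\pi_n^{-1}(\xi)$.

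For case (i), since each $\xi_j\in M_a$, Hoffman's theorem yields an interpolating sequence $S_j=\{z_{j,k}\}_{k}\subset\mathbb{D}$ whose closure in $M(H^\infty(\mathbb{D}))$ contains $\xi_j$. After passing to sufficiently sparse subsequences, the Cartesian product $T=S_1\times\cdots\times S_n\subset\mathbb{D}^n$ is interpolating for $H^\infty(\mathbb{D}^n)$: one builds multi-variable interpolants by multiplying univariate ones, using that the $H^\infty(\mathbb{D})$-interpolation constants on each $S_j$ may be taken uniformly small by shrinking towards the boundary. The closure of an interpolating sequence is a quotient of the Stone--\v{C}ech compactification $\beta\mathbb{N}^n$, hence totally disconnected and contained in $\operatorname{cl}(\mathbb{D}^n)$. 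It then suffices to show $\pi_n^{-1}(\xi)\subset\operatorname{cl}(T)$: for each $\zeta\in\pi_n^{-1}(\xi)$ one constructs an ultrafilter $\mathcal{U}$ on $\mathbb{N}^n$ along which point-evaluations at $T$ converge weak-$\ast$ to $\zeta$. Existence of $\mathcal{U}$ rests on a separable reduction combined with the Runge-type approximation theorem of \cite{Brudnyi2026}, which forces the ultrafilter limit to agree with $\zeta$ on a uniformly dense subalgebra of $H^\infty(\mathbb{D}^n)$. This yields $\pi_n^{-1}(\xi)=F_\xi$ together with the required total disconnectedness.

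For case (ii), renumber so that $\xi_1\in M_s$ and $\xi_2,\dots,\xi_n\in M_a$. Choose interpolating sequences $S_j=\{z_{j,k}\}_k$ for $j\ge 2$ with $\xi_j\in\operatorname{cl}(S_j)$, and consider the analytic slices $D_\alpha=\mathbb{D}\times\{(z_{2,k_2},\dots,z_{n,k_n})\}\subset\mathbb{D}^n$ indexed by $\alpha=(k_2,\dots,k_n)\in\mathbb{N}^{n-1}$. Restriction to each slice dualizes to an injection $M(H^\infty(\mathbb{D}))\hookrightarrow M(H^\infty(\mathbb{D}^n))$. Taking ultrafilter limits over $\alpha$ yields a compact family $X\subset M(H^\infty(\mathbb{D}^n))$ that fibers continuously over the closure of $S_2\times\cdots\times S_n$ in $[M(H^\infty(\mathbb{D}))]^{n-1}$, a totally disconnected space by case~(i) applied in dimension $n-1$. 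Intersecting $X$ with $\pi_n^{-1}(\xi)$ and restricting to those ultrafilters whose projection to the last $n-1$ base coordinates is $(\xi_2,\dots,\xi_n)$ exhibits the fiber as a bundle-like space over a totally disconnected base, with stalks obtained by pulling back $\xi_1$ through the slice restriction and then taking Hoffman-type limits via M\"obius twists in the $M_a$-coordinates. The stalk is a singleton when the ultrafilter limit lies in the trivial-Gleason-part stratum, and a single analytic disk when it lands on a nontrivial Gleason part of $M(H^\infty(\mathbb{D}^n))$; openness of the disk piece follows from openness of $M_a$, pairwise disjointness from the Gleason-part partition property, and the covering dimension bound $\le 2$ from the total disconnectedness of the base together with the two-real-dimensional disk stalks.

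The main obstacle will be to show that the ultrafilter construction in fact exhausts $\pi_n^{-1}(\xi)$, rather than yielding only a subset, and that the disks in case (ii) vary holomorphically in the parametrizing disk. Both rely on a careful application of the Runge-type approximation theorem from \cite{Brudnyi2026}, applied to product-shaped compact neighborhoods $W_1\times\cdots\times W_n$ of $F_\xi$ of the shape described just before Theorem~\ref{teo2.2}, to ensure uniform approximability on $W\cap\mathbb{D}^n$ of arbitrary $H^\infty(\mathbb{D}^n)$-functions by those whose values along the chosen sequences can be prescribed. Once these surjectivity and analyticity points are in place, the identification $M(A(F_\xi))=F_\xi$ follows by invoking Theorem~\ref{teo2.2}.
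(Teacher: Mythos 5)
Your proposal captures some of the right ingredients (Cartesian products of interpolating sequences, the dichotomy between points and analytic disks, the use of Theorem~\ref{teo2.2} at the end), but it misses the two mechanisms that make the paper's proof work, and in both cases the gap is not a matter of detail but of strategy.

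For part~(i), you try to show directly that $\pi_n^{-1}(\xi)\subset\operatorname{cl}(T)$ by constructing, for each $\zeta\in\pi_n^{-1}(\xi)$, an ultrafilter on $\mathbb{N}^n$ along which evaluations on $T$ converge to $\zeta$. That is precisely the hard content of the claim, and the sketch (``separable reduction combined with the Runge-type approximation theorem'') does not carry it out: ultrafilter limits of evaluations at $T$ produce points of $\operatorname{cl}(T)\subset\operatorname{cl}(\mathbb{D}^n)$, whereas a priori $\pi_n^{-1}(\xi)$ could contain corona points outside $\operatorname{cl}(\mathbb{D}^n)$ — that possibility is exactly what must be excluded, so the argument is circular. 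The paper avoids this by a two-step route: (a) it shows only that $F_\xi:=\pi_n^{-1}(\xi)\cap\operatorname{cl}(\mathbb{D}^n)$ lies in $\operatorname{cl}(\zeta)$, via Hoffman's lemma on Blaschke products (the joint zero set of $\pi_n^*\widetilde B_{\zeta_j}$ \emph{inside} $\operatorname{cl}(\mathbb{D}^n)$ is $\operatorname{cl}(\zeta)$); (b) this gives $A(F_\xi)=C(F_\xi)$, hence $M(A(F_\xi))=F_\xi$, and then Theorem~\ref{teo2.2} (which rests on the gluing Theorem~\ref{te2.1}) delivers the equality $\pi_n^{-1}(\xi)=F_\xi$. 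That indirect use of the $H^\infty$-hull characterization is the key idea; without it, you have no control over the corona part of the fiber.

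For part~(ii), the missing tool is the surjective restriction homomorphism $R\colon H^\infty(\mathbb{D}^n)\to H^\infty(\mathbb{D}\times\mathbb{N})$ coming from the interpolating sequence in the $M_a^{n-1}$-coordinates (the content of the displayed construction near~\eqref{e5.2} and Lemma~\ref{lem5.1}). Its transpose $R^*$ embeds $F_\xi$ homeomorphically and analytically into $M(H^\infty(\mathbb{D}\times\mathbb{N}))$, whose structure is known from \cite{Brudnyi2021}: Gleason parts are points or disks, the trivial parts form a closed totally disconnected set, and $\dim=2$. That embedding is what yields the covering-dimension bound. Your alternative — ``totally disconnected base plus 2-dimensional disk stalks gives $\dim\le 2$'' — is not a valid inference without a Hurewicz-type theorem whose hypotheses (closedness of the map, metrizability, etc.) you have not established, and the ``bundle'' structure itself is asserted, not proved. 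Finally, the theorem requires the trivial-part piece to be \emph{nonempty} (so that the decomposition is a disjoint union of a compact totally disconnected set and a possibly empty open union of disks, not the reverse); the paper proves this via Lemma~\ref{lem5.3} (a maximum-modulus/finite-intersection-property argument showing the closure of any nontrivial Gleason part contains a trivial one). Your sketch does not address this point at all — it simply allows for both singleton and disk stalks without showing singletons actually occur.
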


\smallskip

In case \textup{(ii)}, the fiber \( F_\xi \) is a disjoint union of Gleason parts for the algebra
\( H^\infty(\mathbb{D}\times\mathbb{N}) \) of bounded holomorphic functions on the disjoint countable
union of open disks. Each such part is either an analytic disk or a single point.
For a detailed study of the structure of \( M\bigl(H^\infty(\mathbb{D}\times\mathbb{N})\bigr) \), 
see, e.g., \cite{Brudnyi2021}.
\smallskip

\noindent Recall that the \emph{covering dimension} of a normal space \(X\), denoted 
\(\dim X\), is defined as follows:
\begin{itemize}
  \item \(\dim X < n\) if every open cover of \(X\) has a finite open refinement 
  of order \( \le n \) (i.e., no point lies in more than \(n+1\) sets of the refinement);
  \item \(\dim X = n\) if \(\dim X < n+1\) and \(\dim X \nless n\);
  \item \(\dim X = \infty\) if no such integer \(n\) exists.
\end{itemize}

\noindent We mention that every compact Hausdorff totally disconnected space has covering dimension \(0\).

\smallskip
\noindent
See \cite[Ch.~1, §6.7]{Engelking1989}.

\begin{remark}\label{rem2.4}
For strata of \( [M(H^\infty(\mathbb{D}))]^n \) of codimension between \(2\) and \(n-1\), which arise only for \(n \ge 3\), the structure of fibers \( F_\xi \) is more subtle. Nevertheless, one can still show that \( M(A(F_\xi)) = F_\xi \) provided the corona theorem holds for the algebras \( H^\infty(\mathbb{D}^k) \) with \( k = 2, 3, \dots, n-1 \).

The main difficulty arises for the lowest stratum \( M_s^n \). In this case, the fiber \( F_\xi \) may contain the image of an injective continuous map \( \tau\colon \mathbb{D}^n \to \operatorname{cl}(\mathbb{D}^n) \) such that \( \hat f \circ \tau \in H^\infty(\mathbb{D}^n) \) for all \( f \in H^\infty(\mathbb{D}^n) \). The existence of such ‘embedded’ \( n \)-dimensional analytic structures obstructs an inductive approach to proving the corona theorem, as no reduction in dimension is possible. We will explore this phenomenon in more detail in a forthcoming paper.
\end{remark}

We now illustrate the scope of Theorems~\ref{te2.1} and~\ref{teo2.2} by extending
the class of subalgebras of $H^\infty(\mathbb D^n)$ over which the
corresponding B\'ezout equations admit global solutions, and by showing that
such solvability is preserved under all perturbations in $H^\infty(\mathbb D^n)$
up to the maximal size allowed by the corona condition.

Let \(S_n(H^\infty(\mathbb D))\subset H^\infty(\mathbb D^n)\) denote the slice
algebra, i.e. the closed subalgebra consisting of those holomorphic functions
whose boundary values on
\[
\mathbb T^n := \{ z=(z_1,\dots,z_n)\in\mathbb C^n : |z_i|=1,\ \text{for all } i \}
\]
belong to 
\[
\widehat\otimes_{\varepsilon}^{\,n} L^\infty(\mathbb T)\subset L^\infty(\mathbb T^n).
\]
Equivalently, these are the holomorphic functions on \(\mathbb D^n\) that can be
uniformly approximated by bounded functions which are \emph{separately harmonic}
in each variable.

It is not known in general whether \(S_n(H^\infty(\mathbb D))\) coincides with
the uniform algebra generated by \(H^\infty(\mathbb D)\) in each coordinate, that is,
whether
\[
S_n\bigl(H^\infty(\mathbb D)\bigr)
   = \widehat\otimes_\varepsilon^n H^\infty(\mathbb D),
\]
a question related to the longstanding open problem of whether \(H^\infty(\mathbb D)\)
has the approximation property (see Lindenstrauss~\cite{Lindenstrauss1976},
Question~7; see also Bourgain--Reinov~\cite{BourgainReinov1985} for related results).
\medskip

Let \(H_1,\dots,H_m\in H^\infty(\mathbb D^n)\setminus S_n\bigl(H^\infty(\mathbb D)\bigr)\), and set  
\[
H:=(H_1,\dots,H_m):\mathbb D^n\longrightarrow\mathbb C^m.
\]
We denote by
\[
A(S_n;H)
   := \langle\, S_n\bigl(H^\infty(\mathbb D)\bigr),\; H_1,\dots,H_m\,\rangle
   \subset H^\infty(\mathbb D^n)
\]
the uniform closure of the algebra generated by the slice algebra and the 
functions \(H_1,\dots,H_m\).

\begin{theorem}\label{te2.5}
Assume that for every point \(x\in\mathbb D^n\) there exists a neighborhood 
\(U_x\subset\mathbb D^n\) of \(x\) and a one--dimensional complex analytic subset 
\(\Sigma_x\subset\mathbb C^m\) such that \(H(U_x)\subset\Sigma_x\).

Fix \(\delta>0\) and suppose \(g_1,\dots,g_k\in A(S_n;H)\) satisfy the corona condition
\[
\inf_{z\in\mathbb D^n}\ \max_{1\le i\le k}|g_i(z)| \ \ge\ \delta > 0.
\]
Then for every \(h_1,\dots,h_k\in H^\infty(\mathbb D^n)\) such that
\[
\max_{1\le i\le k}\,\|h_i\|_\infty < \delta,
\]
the B\'ezout equation
\[
(g_1+h_1)f_1+\cdots+(g_k+h_k)f_k = 1 
\quad \text{on }\ \mathbb D^n
\]
admits a solution \(f_1,\dots,f_k\in H^\infty(\mathbb D^n)\).
\end{theorem}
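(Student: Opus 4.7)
The approach is to invoke Theorem \ref{te2.1} on the perturbed tuple $(g_i+h_i) \in H^\infty(\mathbb{D}^n)$. First, the tuple satisfies the corona condition with explicit gap
\[
\inf_{z\in\mathbb{D}^n}\max_{1\le i\le k}|g_i(z)+h_i(z)| \ \ge\ \delta - \max_{1\le i\le k}\|h_i\|_\infty \ > \ 0,
\]
since at each $z \in \mathbb{D}^n$ some $|g_{i_0}(z)|$ exceeds $\delta$ while $|h_{i_0}(z)|\le\|h_{i_0}\|_\infty<\delta$. Hence, to produce a global B\'ezout solution in $H^\infty(\mathbb{D}^n)$, it suffices by Theorem \ref{te2.1} to construct a finite open cover $\widehat{U}_1,\dots,\widehat{U}_k$ of $\bigl[M(H^\infty(\mathbb{D}))\bigr]^n$ together with local B\'ezout solutions for the perturbed tuple in each $H^\infty(\mathbb{D}^n \cap \widehat{U}_i)$.

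For the local construction, the one-dimensional analytic hypothesis on $H$ is the key tool. For each $x \in \mathbb{D}^n$, the hypothesis provides a neighborhood $U_x \subset \mathbb{D}^n$ and a one-dimensional analytic $\Sigma_x \subset \mathbb{C}^m$ with $H(U_x) \subset \Sigma_x$. After passing to the irreducible component through $H(x)$ and composing with the normalization map, one obtains a holomorphic map $\psi_x : U_x \to R_x$ into a one-dimensional complex manifold through which each $H_j|_{U_x}$ factors. This identifies the restriction $A(S_n;H)|_{U_x}$ with the uniform closure of the algebra generated by $S_n(H^\infty(\mathbb{D}))|_{U_x}$ and pullbacks of $H^\infty(R_x)$ via $\psi_x$. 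Combining Brudnyi's slice algebra corona theorem with the one-variable corona theorem on $R_x$ (which reduces to Carleson's theorem on lifts to the disk), one solves the unperturbed B\'ezout equation for $(g_i)|_{U_x}$ with norm-controlled local solutions.

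For the perturbed tuple $(g_i+h_i)|_{U_x}$, the same combined structure yields a direct local solution: the $h_i|_{U_x}$ are incorporated within the slice-algebra/one-variable factorization, since the local geometric reduction is preserved under the bounded perturbation and the corona gap remains at least $\delta - \max_i\|h_i\|_\infty$. Extending the cover from interior points to the boundary fibers of $\bigl[M(H^\infty(\mathbb{D}))\bigr]^n$ uses Theorem \ref{teo2.2} and the fiber structure described in Theorem \ref{teo2.3}, together with the Runge-type approximation from \cite{Brudnyi2026} which lifts local solutions defined near fibers to solutions on basic neighborhoods $\pi_n^{-1}(V)\cap\mathbb{D}^n$. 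Extracting a finite subcover by compactness and invoking Theorem \ref{te2.1} then glues these local solutions into the desired global $f_1,\dots,f_k \in H^\infty(\mathbb{D}^n)$.

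The main obstacle I expect is the sharp perturbation bound $\max_i\|h_i\|_\infty < \delta$: a naive Neumann-series argument applied to the unperturbed local solutions $\tilde f_i$ with $\|\tilde f_i\|_\infty \le C/\delta$ only admits perturbations of size $\le \delta/(kC)$. Achieving the sharp constant requires solving the perturbed B\'ezout equation \emph{directly} at the local step, using that the one-dimensional analytic factorization of $H$ reduces the local corona problem on $U_x$ to a setting that accepts arbitrary $H^\infty(U_x)$ perturbations of the original $A(S_n;H)$-valued tuple, with the corona gap as the only relevant parameter. Additionally, extending the local covers to all strata of $\bigl[M(H^\infty(\mathbb{D}))\bigr]^n$, especially mixed strata of codimension $\ge 2$ where the fiber structure is subtler (cf.\ Remark \ref{rem2.4}), adds further technical difficulty.
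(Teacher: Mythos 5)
Your proposal takes a genuinely different route from the paper's, but the route has a gap that I don't think can be filled as described.

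The paper proves Theorem~\ref{te2.5} by \emph{contradiction at the level of maximal ideals}: if the ideal generated by $F_i=g_i+h_i$ were proper, there would be a character $\chi\in M(H^\infty(\mathbb D^n))$ annihilating all the $\hat F_i$. Setting $\xi=\pi_n(\chi)$, one uses Theorem~\ref{teo2.2} (which holds for \emph{every} $\xi$, with no stratum-by-stratum analysis) to place $\chi$ in $M(A(F_\xi))$, factors $H=\widetilde H\circ h$ via Lemma~\ref{lem6.1}, and then shows -- via Su\'arez's identification $M(\mathcal A(K))\cong K$ for $K=\hat h^*(F_\xi)$ -- that $\hat g_i(\chi)=\widetilde g_i(\hat h^*(\chi))$ with $\hat h^*(\chi)\in K$. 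Since $\max_i|\widetilde g_i|\ge\delta$ on $K$ (inherited from the corona condition on $F_\xi$), and $|\hat h_i(\chi)|\le\|h_i\|_\infty<\delta$, one gets $\hat F_{i_0}(\chi)\ne0$ for some $i_0$, a contradiction. The perturbations $h_i$ are handled by the trivial estimate $|\hat h_i(\chi)|<\delta$ alone; no structure of $h_i$ is needed, which is exactly what makes the sharp bound work.

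Your proposal instead attempts a constructive cover-and-glue argument via Theorem~\ref{te2.1}. The central gap is the claim that ``the $h_i|_{U_x}$ are incorporated within the slice-algebra/one-variable factorization, since the local geometric reduction is preserved under the bounded perturbation.'' This is not true: the $h_i$ are arbitrary elements of $H^\infty(\mathbb D^n)$, they do not factor through $S_n(H^\infty(\mathbb D))$, through $H$, or through $h:\mathbb D^n\to\mathbb D$, and the one-dimensional analytic hypothesis on $H$ places no constraint on them. So the reduction to a slice-algebra-plus-one-variable problem that you use to solve the unperturbed local B\'ezout equation simply does not apply to the perturbed tuple. You recognize that a Neumann-series patch on top of the unperturbed solution only yields a bound like $\delta/(kC)$, but the fix you propose -- ``solving the perturbed equation directly at the local step'' -- is exactly the point at which the argument is missing, and I don't see how to supply it within your framework. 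Moreover, your step of extending local solutions out to the boundary strata of $[M(H^\infty(\mathbb D))]^n$ leans on Theorem~\ref{teo2.3}, which only treats the top two strata; as Remark~\ref{rem2.4} notes, the mixed strata of codimension $\ge2$ are subtler and not available here. The paper sidesteps both issues entirely by working at the level of characters and using Theorem~\ref{teo2.2} uniformly over all $\xi$, so no cover and no local solvability for the perturbed tuple is ever needed.
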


\begin{remark}\label{rem2.6}\rm
(a) 
The hypothesis of Theorem~\ref{te2.5} on the map
\(H=(H_1,\dots,H_m)\) is equivalent to the existence of holomorphic
maps
\[
h:\mathbb D^n\to\mathbb D,
\qquad
\widetilde H:\mathbb D\to\mathbb C^m
\]
with \(H=\widetilde H\circ h\); see Lemma~\ref{lem6.1}. In particular, Theorem~\ref{te2.5} applies whenever the components
of \(H\) factor through a holomorphic map from \(\mathbb D^n\) to \(\mathbb D\).

\smallskip

(b) 
If \(g_1,\dots,g_k\in S_n\bigl(H^\infty(\mathbb D)\bigr)\) satisfy the corona 
condition
\[
\inf_{z\in\mathbb D^n}\ \max_{1\le i\le k}|g_i(z)|>0,
\]
then the B\'ezout equation
\[
g_1f_1+\cdots+g_kf_k = 1
\quad \text{on }\mathbb D^n
\]
admits a solution \(f_1,\dots,f_k\in S_n\bigl(H^\infty(\mathbb D)\bigr)\).
This was established in \cite[Cor.~1.16]{Brudnyi2013} as a consequence of the 
more general result \cite[Th.~1.12]{Brudnyi2013}.
We note that Theorem~\ref{te2.5} is significantly stronger: it applies already 
when \(H\) consists of a single function from 
\(H^\infty(\mathbb D^n)\setminus S_n\bigl(H^\infty(\mathbb D)\bigr)\).

\smallskip

(c)
The perturbation condition in Theorem~\ref{te2.5} is optimal.  
Indeed, assume
\[
\inf_{z\in\mathbb D^n}\ \max_{1\le i\le k}|g_i(z)|=\delta>0.
\]
Then there exist constants \(c_1,\dots,c_k\in\mathbb C\) with
\[
\max_{1\le i\le k}|c_i|=\delta
\]
such that
\[
\inf_{z\in\mathbb D^n}\ \max_{1\le i\le k}|g_i(z)-c_i|=0.
\]
Thus the corresponding B\'ezout equation
\[
(g_1-c_1)f_1+\cdots+(g_k-c_k)f_k = 1
\]
has no solution.
\end{remark}

\section{Proof of Theorem~\ref{te2.1}}
\begin{proof}
We argue by induction on the number of variables \(n\).

For \(n=1\) the statement is Theorem~8.3 of \cite{Brudnyi2023}. Assume the result holds for dimension \(n\), and consider \(H^\infty(\mathbb{D}^{\,n+1},A)\).

Let \((\widehat U_i)_{1\le i\le k}\) be an open cover of \([M(H^\infty(\mathbb{D}))]^{\,n+1}\). By the product topology there exist a finite open cover \((\widehat V_j)_{1\le j\le l}\) of \([M(H^\infty(\mathbb{D}))]^n\) and, for each \(j\), a finite open cover \((\widehat W_{jm})_{1\le m\le l_j}\) of \(M(H^\infty(\mathbb{D}))\) such that the cover 
\((\widehat V_j\times \widehat W_{jm})_{j,m}\) of  \([M(H^\infty(\mathbb{D}))]^{\,n+1}\) refines \((\widehat U_i)_{1\le i\le k}\). Set
\[
U_{jm} := V_j\times W_{jm},\qquad 
V_j = \mathbb{D}^n\cap \widehat V_j,\quad 
W_{jm}=\mathbb{D}\cap \widehat W_{jm}.
\]
Then \((U_{jm})_{j,m}\) is a finite cover of \(\mathbb{D}^{\,n+1}\) refining \((U_i)_{1\le i\le k}\).

Suppose the Bézout equation
\begin{equation}\label{e3.1}
\sum_{i=1}^k f_i g_i = 1
\end{equation}
admits  local   solutions
\((g_{1;jm},\dots, g_{k;jm})\in [H^\infty(U_{jm},A)]^k\) on the cover  \((U_{jm})_{j,m}\).
For each \(i\in\{1,\dots, k\}\) define slice functions on  \(W_{jm}\),
\[
g_{i;jm}^s(z)(z_1,\dots,z_n):=g_{i;jm}(z_1,\dots,z_n,z),\qquad (z_1,\dots, z_n)\in V_j,\ z\in W_{jm}.
\]
These belong to \(H^\infty(W_{jm},H^\infty(V_j,A))\). Likewise, each \(f_i\) defines
\[
f_{i;j}^s \in H^\infty(\mathbb D,H^\infty(V_j,A)).
\]
Passing to slice functions in \eqref{e3.1} yields that the \(H^\infty(V_j,A)\)-valued Bézout equation on \(\mathbb D\),
\begin{equation}\label{e3.2}
\sum_{i=1}^k f_{i;j}^s \,g_i=1,
\end{equation}
has local  solutions \((g_{1;jm}^s,\dots, g_{k;jm}^s)\in [H^\infty(W_{jm},H^\infty(V_j,A))]^k\) on the cover \((W_{jm})\) of \(\mathbb D\). 

Since \(W_{jm}=\mathbb{D}\cap \widehat W_{jm}\) and \(\mathcal W_j=(\widehat W_{jm})_{1\le m\le l_j}\) is an open cover of \(M(H^\infty(\mathbb{D}))\), applying \cite[Thm.~8.3]{Brudnyi2023} we obtain global  solutions \((h_{1;j},\dots, h_{k;j})\in [H^\infty(\mathbb D,H^\infty(V_j,A))]^k\), \(1\le i\le k\), of \eqref{e3.2}.

Define
\[
g_{i;j}(z_1,\dots,z_{n+1}) := (h_{i;j}(z_{n+1}))(z_1,\dots,z_n),\qquad (z_1,\dots, z_{n+1})\in V_j\times\mathbb D.
\]
Then \(g_{i;j}\in H^\infty(V_j\times\mathbb D,A)\) and
\[
\sum_{i=1}^k f_i g_{i;j}=1 \quad\text{on } V_j\times \mathbb D.
\]
Hence we obtain local bounded holomorphic solutions of the original Bézout equation \eqref{e3.1}
on the cover \((V_j\times\mathbb D)_{1\le j\le l}\) of \(\mathbb D^{\,n+1}\).

Now view each \(g_{i;j}\) as a slice function
\[
g_{i;j}^s \in H^\infty(V_j,H^\infty(\mathbb D,A)).
\]
Similarly, \(f_i\) induce slice functions
\[
f_i^s \in H^\infty(\mathbb D^n,H^\infty(\mathbb D,A)).
\]
Then the solvability of the Bézout equation \eqref{e3.1} on the cover \((V_j\times \mathbb D)\) transforms into the identities
\[
\sum_{i=1}^k f_i^s g_{i;j}^s = 1 \quad \text{in } H^\infty(\mathbb D,A)\ \text{for each } V_j,
\]
i.e. we obtain local bounded holomorphic solutions of the Bézout equation on \(\mathbb D^n\),
\begin{equation}\label{e3.3}
\sum_{i=1}^k f_i^s h_i = 1
\end{equation}
with values in the algebra \(H^\infty(\mathbb D,A)\), over the cover \((V_j)_{1\le j\le k}\) of \(\mathbb D^n\), where \(V_j = \mathbb{D}^n\cap \widehat V_j\) and \(\mathcal V=(\widehat V_j)_{1\le j\le l}\) is a finite open cover of \([M(H^\infty(\mathbb{D}))]^n\).

By the induction hypothesis, there exists a solution \((h_1,\dots, h_k) \in [H^\infty(\mathbb D^n,H^\infty(\mathbb D,A))]^k\) of \eqref{e3.3}. 

Finally, set
\[
g_i(z_1,\dots,z_{n+1}) := (h_i(z_1,\dots,z_n))(z_{n+1}), \qquad (z_1,\dots, z_{n+1})\in\mathbb D^{n+1}.
\]
Then \(g_i\in H^\infty(\mathbb D^{n+1},A)\) and 
\[
\sum_{i=1}^k f_i g_i=1.
\]
This completes the induction.
\end{proof}

\section{ Proof of Theorem~\ref{teo2.2}}
\begin{proof}
Recall that \(A(F_\xi)\) is the uniform closure of \(\widehat{H^\infty(\mathbb{D}^n)}|_{F_\xi}\), hence \(M\!\left(A(F_\xi)\right)\) is exactly the \(H^\infty\)-hull of \(F_\xi\). Also recall that
\[
\pi_n \colon M\!\left(H^\infty(\mathbb{D}^n)\right)\longrightarrow \bigl[M(H^\infty(\mathbb{D}))\bigr]^{n}
\]
is the transpose of the canonical embedding
\[
\widehat{\otimes}_{\varepsilon}^{\,n} H^\infty(\mathbb{D})\ \hookrightarrow\ H^\infty(\mathbb{D}^n),
\]
and that the embedded algebra separates the fibers of \(\pi_n\).

Since the \(H^\infty\)-hull of \(\pi_n^{-1}(\xi)\) coincides with \(\pi_n^{-1}(\xi)\), we have \(M\!\left(A(F_\xi)\right)\subset \pi_n^{-1}(\xi)\).

For the reverse inclusion, suppose \(\zeta\in \pi_n^{-1}(\xi)\setminus M\!\left(A(F_\xi)\right)\). By the definition of the \(H^\infty\)-hull, there exists \(f\in H^\infty(\mathbb{D}^n)\) with \(\hat f(\zeta)=1\) and
\[
m:=\max_{F_\xi}|\hat f|<1.
\]
Set \(g:=1-f\). Then \(\hat g(\zeta)=0\) and \(|\hat g(x)|\ge 1-m>0\) for all \(x\in F_\xi\). By continuity of \(\hat g\) and compactness of \(F_\xi\), there exists an open neighborhood \(U\subset \bigl[M(H^\infty(\mathbb{D}))\bigr]^n\) of \(\zeta\) such that
\begin{equation}\label{e4.1}
|\hat g(y)|\ \ge\ \tfrac12(1-m)\qquad \text{for all } y\in \pi_n^{-1}(U).
\end{equation}

By the product topology on \(\bigl[M(H^\infty(\mathbb{D}))\bigr]^n\), there exist functions
\[
g_1,\dots,g_k\ \in\ \widehat{\otimes}_{\varepsilon}^{\,n} H^\infty(\mathbb{D})
\]
whose Gelfand transforms satisfy, for some \(\delta>0\) and for a neighborhood \(W\subset U\) of \(\zeta\) with \(\overline{W}\subset U\),
\begin{equation}\label{e4.2}
\hat g_i|_{F_\xi}=0\ \ (1\le i\le k),\qquad
\max_{1\le i\le k}|\hat g_i(y)|\ \ge\ \delta\ \ \text{for all }y\in \pi_n^{-1}(W).
\end{equation}
Since each \(g_i\) extends continuously to \(\bigl[M(H^\infty(\mathbb{D}))\bigr]^n\), write \(h_i\) for that extension. Then \(\hat g_i=h_i\circ \pi_n\), so \(\hat g_i\) is constant on the fibers of \(\pi_n\). In particular, from \eqref{e4.2} it follows that for every \(x\in \bigl[M(H^\infty(\mathbb{D}))\bigr]^n\setminus W\) there exist an open neighborhood \(W_x\) and an index \(i=i(x)\) with
\begin{equation}\label{e4.3}
|\hat g_i(y)|\ \ge\ \delta_x>0\qquad \text{for all } y\in \pi_n^{-1}(W_x).
\end{equation}
By compactness, there is a finite subcover \((W_{x_j})_{j=1}^{s}\) of \(\bigl[M(H^\infty(\mathbb{D}))\bigr]^n\setminus W\).

Thus the tuple \((g,g_1,\dots,g_k)\subset H^\infty(\mathbb{D}^n)\) satisfies a local corona condition on the finite open cover
\[
\bigl(W\cap \mathbb{D}^n,\ W_{x_1}\cap \mathbb{D}^n,\ \dots,\ W_{x_s}\cap \mathbb{D}^n\bigr)
\]
of \(\mathbb{D}^n\). Indeed, on \(W \cap \mathbb{D}^n\), inequality \eqref{e4.1} gives a uniform lower bound for \(|g|\), and on each \(W_{x_j} \cap \mathbb{D}^n\) inequality \eqref{e4.3} gives a uniform lower bound for \(|g_{i(x_j)}|\).
By Theorem~\ref{te2.1} (recall it asserts that local solvability of a Bézout equation on a
finite open cover of \(\mathbb{D}^n\), arising from a cover of \(\bigl[M(H^\infty(\mathbb{D}))\bigr]^n\) via \(\pi_n\),
yields a global solution in \(H^\infty(\mathbb{D}^n)\)),
 there exist \(f_0,f_1,\dots,f_k\in H^\infty(\mathbb{D}^n)\) such that
\[
g\,f_0+g_1 f_1+\cdots+g_k f_k=1\quad \text{on }\mathbb{D}^n.
\]

Taking Gelfand transforms and restricting to \(\pi_n^{-1}(\xi)\), and using \(\hat g_i|_{F_\xi}=0\) for \(1\le i\le k\), we obtain
\[
\hat f_0(x)\,\hat g(x)=1\qquad \text{for all }x\in \pi_n^{-1}(\xi).
\]
But \(\hat g(\zeta)=0\), a contradiction. Hence \(\pi_n^{-1}(\xi)\subset M\!\left(A(F_\xi)\right)\), and therefore
\[
\pi_n^{-1}(\xi)=M\!\left(A(F_\xi)\right).
\]

Finally,
\[
\mathrm{cl}(\mathbb{D}^n)=M\!\left(H^\infty(\mathbb{D}^n)\right)
\]
(i.e., the corona theorem holds for \(H^\infty(\mathbb{D}^n)\)) if and only if
\[
M\!\left(A(F_\xi)\right)=F_\xi\qquad \text{for all }\xi\in \bigl[M(H^\infty(\mathbb{D}))\bigr]^n,
\]
equivalently, the \(H^\infty\)-hull of each fiber \(F_\xi\) coincides with \(F_\xi\).
\end{proof}
\section{Proof of Theorem~\ref{teo2.3}}
\begin{proof}[Proof of (i)]
Let \( \xi = (\xi_1, \dots, \xi_n) \in M_a^n \). 
Since each coordinate \( \xi_j \in  M(H^\infty(\mathbb{D})) \) lies in a nontrivial Gleason part, a classical result of Hoffman 
(see, e.g., \cite[Thm.~5.5]{Hoffman1967}) guarantees the existence of an interpolating sequence
\[
\zeta_j = \{ z_{j,k} \}_{k=1}^\infty \subset \mathbb{D}
\]
whose closure in \( M(H^\infty(\mathbb{D})) \) contains \( \xi_j \), for \( j = 1, \dots, n \).

\smallskip

Define the product sequence
\[
\zeta := \zeta_1 \times \cdots \times \zeta_n 
= \left\{ z_k := (z_{1,k}, \dots, z_{n,k}) \right\}_{k=1}^\infty \subset \mathbb{D}^n,
\]
whose closure in the product topology on \( [M(H^\infty(\mathbb{D}))]^n \) contains \( \xi \).\smallskip

\noindent \textsl{CLAIM 1.} 
\textit{The sequence \( \zeta \) is interpolating for \( H^\infty(\mathbb{D}^n) \).}

\smallskip

Indeed, for each  \( j \) and \( k \), let \( \varphi_{j,k} \in H^\infty(\mathbb{D}) \) be bounded holomorphic functions satisfying
\[
\varphi_{j,k}(z_{j,\ell}) = \delta_{k,\ell}, \qquad 
\sum_{k=1}^\infty |\varphi_{j,k}(z)| \le C_j, \quad \text{for all } z \in \mathbb{D},
\]
for some constants \( C_j = C_j(\zeta_j) > 0 \); see \cite[Ch.~VII, Thm.~2.1]{Garnett1981}. Define
\[
\psi_k(z) := \prod_{j=1}^n \varphi_{j,k}(z_j), 
\quad z = (z_1, \dots, z_n) \in \mathbb{D}^n, \quad k \in \mathbb{N}.
\]
Then \( \psi_k \in H^\infty(\mathbb{D}^n) \), and
\[
\psi_k(z_\ell) = \delta_{k,\ell}, \quad \forall\, k, \ell \in \mathbb{N}, \qquad 
\sum_{k=1}^\infty |\psi_k(z)| \le C := \prod_{j=1}^n C_j, \quad \forall\, z \in \mathbb{D}^n.
\]
Thus, the  map
\[
L \colon \ell^\infty(\zeta) \to H^\infty(\mathbb{D}^n), \qquad 
L\big( (c_k)_{k=1}^\infty \big) := \sum_{k=1}^\infty c_k \psi_k,
\]
is a bounded linear  operator of norm \( \le C \), with 
\( L((c_k))|_\zeta = (c_k) \) for all \( (c_k) \in \ell^\infty(\zeta) \). 
Hence \( \zeta \) is interpolating for \(H^\infty(\mathbb{D}^n)\), as claimed.

\smallskip

 It follows from \textsl{Claim 1} that the restriction map
\[
H^\infty(\mathbb{D}^n) \to \ell^\infty(\zeta), \qquad f \mapsto f|_\zeta,
\]
induces, by transposition, an embedding of maximal ideal spaces
\[
i_\zeta \colon M\big( \ell^\infty(\zeta) \big) \hookrightarrow M\big( H^\infty(\mathbb{D}^n) \big).
\]
Since \( \ell^\infty(\zeta) \) is self-adjoint, its maximal ideal space 
\( M(\ell^\infty(\zeta)) \) is homeomorphic to the Stone–Čech compactification \( \beta(\mathbb{N}) \), and
\[
i_\zeta \big( M(\ell^\infty(\zeta)) \big) 
= \operatorname{cl}(\zeta) \subset \operatorname{cl}(\mathbb{D}^n)
\]
is totally disconnected.

\smallskip

Let \( B_{\zeta_j} \in H^\infty(\mathbb{D}) \) be the Blaschke product vanishing precisely on \( \zeta_j \), 
and denote by
\( \hat{B}_{\zeta_j} \in C\big( M(H^\infty(\mathbb{D})) \big)\)
its Gelfand transform. 
Let \( p_j \colon [M(H^\infty(\mathbb{D}))]^n \to M(H^\infty(\mathbb{D})) \) 
be the projection onto the \( j \)-th coordinate, and define
\[
\widetilde{B}_{\zeta_j} := \hat{B}_{\zeta_j} \circ p_j 
\in C\big( [M(H^\infty(\mathbb{D}))]^n \big).
\]
By the product topology, the joint zero set of the functions \( \widetilde{B}_{\zeta_j} \), \( 1 \le j \le n \), 
coincides with the closure of the interpolating sequence \( \zeta \) in \( [M(H^\infty(\mathbb{D}))]^n \) 
and hence contains \( \xi \).

By definition, the pullbacks \( \pi_n^* \widetilde{B}_{\zeta_j} \in C\big(M(H^\infty(\mathbb{D}^n))\big) \) 
are the Gelfand transforms of the functions \( (p_j|_{\mathbb{D}^n})^*(B_{\zeta_j}) \in H^\infty(\mathbb{D}^n) \), given by
\[
(p_j|_{\mathbb{D}^n})^*(B_{\zeta_j})(z_1,\dots, z_n) = B_{\zeta_j}(z_j), 
\qquad 1 \le j \le n,
\]
and they vanish on \(F_\xi\).

We next identify the joint zero set of these pullbacks.
\smallskip

\noindent \textsl{CLAIM 2.} 
\textit{The joint zero set of the functions \( \pi_n^* \widetilde{B}_{\zeta_j} \) 
on \( \operatorname{cl}(\mathbb{D}^n) \subset M(H^\infty(\mathbb{D}^n)) \), \( 1 \le j \le n \), 
is precisely \( \operatorname{cl}(\zeta) \).}

\smallskip

To prove this, suppose,  to the contrary, that there exists a point 
\( \eta \in \operatorname{cl}(\mathbb{D}^n) \setminus \operatorname{cl}(\zeta) \) such that
\[
\pi_n^* \widetilde{B}_{\zeta_j}(\eta) = 0 
\quad \text{for all } j = 1, \dots, n.
\]
Let \( U \subset M(H^\infty(\mathbb{D}^n)) \) be an open neighborhood of \( \eta \) whose closure is disjoint from \( \operatorname{cl}(\zeta) \). 
For each \( \varepsilon \in (0,1) \), consider the open set
\[
U_\varepsilon := 
U \cap \bigl\{ \theta \in M(H^\infty(\mathbb{D}^n)) : 
|\pi_n^* \widetilde{B}_{\zeta_j}(\theta)| < \varepsilon 
\text{ for all } 1 \le j \le n \bigr\}.
\]
Define the pseudohyperbolic metric on \( \mathbb{D}^n \) by
\[
\rho_n(z, w) := \max_{1 \le j \le n} \rho(z_j, w_j), \qquad z, w \in \mathbb{D}^n,
\]
where \( \rho \) is the pseudohyperbolic metric on \( \mathbb{D} \), see \eqref{e2.5}. 
Since \( \mathbb{D}^n \) is dense in \( \operatorname{cl}(\mathbb{D}^n) \), we may select a point
\[
z_\varepsilon \in U_\varepsilon \cap \mathbb{D}^n.
\]
By a result of Hoffman (see, e.g., \cite[Ch.~X, Lemma~1.4]{Garnett1981}), we have
\begin{equation}\label{e5.1}
\lim_{\varepsilon \to 0} \operatorname{dist}(z_\varepsilon, \zeta) = 0,
\end{equation}
where, for \( z \in \mathbb{D}^n \) and \( Z \subset \mathbb{D}^n \),
\[
\operatorname{dist}(z, Z) := \inf_{y \in Z} \rho_n(z, y).
\]
Let \( \nu \in \operatorname{cl}(\mathbb{D}^n) \) be a limit point of the sequence \( \{ z_{1/k} \}_{k \in \mathbb{N}} \). 
By the Schwarz–Pick lemma (see, e.g., \cite[Ch.~I, Lemma~1.2]{Garnett1981}) and \eqref{e5.1}, 
we obtain \( \nu \in \operatorname{cl}(\zeta) \).
On the other hand, since \( \nu \in \operatorname{cl}(U) \) and 
\( \operatorname{cl}(U) \cap \operatorname{cl}(\zeta) = \varnothing \), 
a contradiction, proving \textsl{Claim 2}.
\smallskip

\textsl{Claim 2} implies that
\[
F_\xi := \pi_n^{-1}(\xi) \cap \operatorname{cl}(\mathbb{D}^n) 
\subset 
\left\{ \nu\in \operatorname{cl}(\mathbb{D}^n) : 
\pi_n^* \widetilde{B}_{\zeta_j}(\nu) = 0,\ 1 \le j \le n \right\} 
\subset 
\operatorname{cl}(\zeta).
\]
Since \( \zeta \) is an interpolating sequence for \( H^\infty(\mathbb{D}^n) \), 
the restriction of the algebra 
\( \mathcal{O}\big(M(H^\infty(\mathbb{D}^n))\big) \) 
to \( \operatorname{cl}(\zeta) \) coincides with the full algebra 
\( C\big(\operatorname{cl}(\zeta)\big) \). 
By the Tietze–Urysohn extension theorem, the same property holds for any closed subset of \( \operatorname{cl}(\zeta) \), 
and in particular for the compact set \( F_\xi \subset \operatorname{cl}(\zeta) \). 
Hence \( A(F_\xi) = C(F_\xi) \), and therefore the maximal ideal space \( M(A(F_\xi)) \) coincides with \( F_\xi \). 
By Theorem~\ref{teo2.2}, we conclude that
\[
\pi_n^{-1}(\xi) = F_\xi \subset \operatorname{cl}(\zeta).
\]
As \( \operatorname{cl}(\zeta) \) is totally disconnected, the same is true for \( F_\xi \).

This completes the proof of part~(i) of the theorem.\medskip

\noindent\textit{Proof of (ii).} 
If exactly one coordinate of \( \xi \) lies in \( M_s \) and the remaining \( n-1 \) coordinates lie in \( M_a \), 
then the natural action of the symmetric group on coordinates of \( \mathbb{D}^n \) 
preserves both algebras \( \widehat{\otimes}_\varepsilon^n H^\infty(\mathbb{D}) \) and \( H^\infty(\mathbb{D}^n) \). 
Consequently, it extends to actions on their maximal ideal spaces 
\( \bigl[M\big(H^\infty(\mathbb{D})\big)\bigr]^n\) and \( M\big(H^\infty(\mathbb{D}^n)\big) \), 
and these actions commute with the projection
\[
\pi_n \colon M\big(H^\infty(\mathbb{D}^n)\big) \to 
\bigl[M\big(H^\infty(\mathbb{D})\big)\bigr]^n.
\]
In particular, the fibers of \( \pi_n \) corresponding to points in the same orbit 
are homeomorphic and have identical analytic structure. 
Hence, without loss of generality, we may and will assume that 
\[
\xi \in M_a^{n-1} \times M_s.
\]

We first show that
\[
\pi_n^{-1}(\xi)
=
F_\xi := \pi_n^{-1}(\xi)\cap \mathrm{cl}(\mathbb{D}^n),
\]
see Lemma~\ref{lem5.1} below.

Let \(\xi = (\xi^{\,n-1},\xi_n)\), where \(\xi^{\,n-1} \in M_a^{\,n-1}\) and \(\xi_n \in M_s\). By \textsl{Claim~1} in the proof of part~(i) of the theorem, there exists an interpolating sequence \(\zeta = \{z_k\} \subset \mathbb{D}^{\,n-1}\) for \(H^\infty(\mathbb{D}^{\,n-1})\), which is the Cartesian product of interpolating sequences for \(H^\infty(\mathbb{D})\) in the coordinate disks, such that the closure of \(\zeta\) in \(\bigl[M(H^\infty(\mathbb{D}))\bigr]^{n-1}\) contains the point \(\xi^{\,n-1}\).
\noindent
Moreover, there exist functions \(\psi_k \in H^\infty(\mathbb{D}^{\,n-1})\), \(k \in \mathbb{N}\), such that
\[
\psi_k(z_i) = \delta_{k,i}, \quad \forall\, k,i \in \mathbb{N}, \qquad
\sum_{k=1}^{\infty} \lvert \psi_k(z) \rvert \le C = C(\zeta), \quad \forall\, z \in \mathbb{D}^{\,n-1}.
\]
\noindent
Then for every uniformly bounded sequence of functions \(\{f_k\} \subset H^\infty(\mathbb{D})\), there exists \(f \in H^\infty(\mathbb{D}^n)\) such that
\[
f(z_k, z) = f_k(z) \quad \forall\, z \in \mathbb{D},\ \forall\, k \in \mathbb{N}.
\]
Indeed, set
\[
f(w,z) := \sum_{k=1}^{\infty} \psi_k(w)\, f_k(z), \qquad (w,z) \in \mathbb{D}^{\,n-1} \times \mathbb{D}.
\]
Since \(\sum_{k=1}^{\infty} \lvert \psi_k(w) \rvert \le C\) for all \(w \in \mathbb{D}^{\,n-1}\) and \(\sup_k \lVert f_k \rVert_\infty < \infty\), the series defining \(f\) converges uniformly on compact subsets of \(\mathbb{D}^{\,n-1} \times \mathbb{D}\), and
\[
\lVert f \rVert_\infty \le C\, \sup_{k} \lVert f_k \rVert_\infty.
\]
Uniform convergence on compacta shows that \(f \in H^\infty(\mathbb{D}^n)\). Moreover, for each \(i \in \mathbb{N}\) and every \(z \in \mathbb{D}\),
\[
f(z_i, z) = \sum_{k=1}^{\infty} \psi_k(z_i)\, f_k(z) = \sum_{k=1}^{\infty} \delta_{k,i}\, f_k(z) = f_i(z),
\]
as required.

Thus the restriction map
\begin{equation}\label{e5.2}
R \colon H^\infty(\mathbb{D}^n) \to H^\infty(\mathbb{D}\times\mathbb{N}), 
\qquad
R(f)(k) := f \bigl|_{\{z_k\}\times \mathbb{D}},
\end{equation}
is a surjective homomorphism of Banach algebras, where \(H^\infty(\mathbb{D}\times\mathbb{N})\) is equipped with the norm
\[
\lVert (g_k)_{k\in\mathbb{N}} \rVert
:= \sup_{k\in\mathbb{N}} \lVert g_k \rVert_\infty.
\]
\smallskip
\noindent
\noindent
In particular, the induced transpose map on maximal ideal spaces
\[
R^* \colon M\bigl(H^\infty(\mathbb{D}\times\mathbb{N})\bigr) \longrightarrow M\bigl(H^\infty(\mathbb{D}^n)\bigr),
\qquad
R^*(\varphi) := \varphi \circ R,
\]
is a continuous embedding, and its image is \(H^\infty\)\nobreakdash-convex; that is, for every character
\(\eta \in M\bigl(H^\infty(\mathbb{D}^n)\bigr) \setminus R^*\!\bigl(M(H^\infty(\mathbb{D}\times\mathbb{N}))\bigr)\)
there exists \(f \in H^\infty(\mathbb{D}^n)\) such that
\[
\lvert \hat{f}(\eta) \rvert
>
\sup_{ R^*(M(H^\infty(\mathbb{D}\times\mathbb{N}))) }
\lvert \hat{f} \rvert .
\]

\begin{lemma}\label{lem5.1}
We have
\[
R^*\bigl(M(H^\infty(\mathbb{D}\times\mathbb{N}))\bigr)
\supset
\pi_n^{-1}(\xi)
=
F_\xi.
\]
\end{lemma}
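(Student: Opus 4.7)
The plan is to prove both claims of the lemma via a single contradiction argument combining the $H^\infty$-convexity of $\operatorname{Im}(R^*)$ (recalled in the preamble), the corona theorem for $H^\infty(\mathbb{D}\times\mathbb{N})$ from \cite{Brudnyi2021}, and Theorem~\ref{te2.1}.

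The starting observation is that for each $j = 1,\ldots,n-1$, the pullback $B_j(z) := B_{\zeta_j}(z_j) \in H^\infty(\mathbb{D}^n)$ of the Blaschke product of the interpolating sequence $\zeta_j$ satisfies $\widehat{B_j}(\eta) = \widehat{B_{\zeta_j}}(\xi_j) = 0$ for every $\eta \in \pi_n^{-1}(\xi)$, since by construction $\xi_j \in M_a \cap \operatorname{cl}(\zeta_j)$. Assuming for contradiction that some $\eta \in \pi_n^{-1}(\xi) \setminus \operatorname{Im}(R^*)$ exists, the $H^\infty$-convexity of $\operatorname{Im}(R^*)$, together with the corona theorem for $H^\infty(\mathbb{D}\times\mathbb{N})$ \cite{Brudnyi2021} (which gives $\operatorname{Im}(R^*) = \operatorname{cl}_{M(H^\infty(\mathbb{D}^n))}(\zeta \times \mathbb{D})$, so that $\sup_{\operatorname{Im}(R^*)}|\widehat h| = \sup_{\zeta\times\mathbb{D}}|h|$ for every $h$), will produce $f \in H^\infty(\mathbb{D}^n)$ with $\widehat f(\eta) = 1$ and $s := \sup_{\zeta\times\mathbb{D}}|f| < 1$. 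Setting $g := 1 - f$, one has $\widehat g(\eta) = 0$ and $|g| \ge 1 - s > 0$ on $\zeta \times \mathbb{D}$.

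The key technical step will be to propagate the lower bound for $|g|$ from the discrete set $\zeta\times\mathbb{D}$ to an open neighborhood. Iterating the one-dimensional Schwarz--Pick inequality coordinatewise yields $|g(z)-g(w)| \le 2n\|g\|_\infty\,\rho_n(z,w)$ for $z,w\in\mathbb{D}^n$, where $\rho_n$ is the maximum pseudohyperbolic distance. Combining this with Hoffman's lemma \cite[Ch.~X, Lemma~1.4]{Garnett1981}, which places the sub-level set $\{|B_{\zeta_j}|<\epsilon\}$ inside a pseudohyperbolic neighborhood of $\zeta_j$ that shrinks to $\zeta_j$ as $\epsilon\downarrow 0$, I will choose $\epsilon > 0$ so small that $|g| \ge (1-s)/2$ on $U_0 := \{z \in \mathbb{D}^n : |B_j(z)| < \epsilon,\ 1 \le j \le n-1\}$. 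Then the finite open cover of $[M(H^\infty(\mathbb{D}))]^n$ defined by
\[
\widehat U_0 := \{\phi : |\widehat{B_{\zeta_j}}(\phi_j)| < \epsilon \ \forall j\le n-1\},\qquad \widehat U_j := \{\phi : |\widehat{B_{\zeta_j}}(\phi_j)| > \epsilon/2\},\quad j=1,\ldots,n-1,
\]
admits local B\'ezout solutions for the $n$-tuple $(g, B_1, \ldots, B_{n-1})$: on $\widehat U_0 \cap \mathbb{D}^n = U_0$ one takes $(1/g,0,\ldots,0)$, and on each $\widehat U_j \cap \mathbb{D}^n$ one places $1/B_{\zeta_j}(z_j)$ in the $(j+1)$-st slot and zeros elsewhere. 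Theorem~\ref{te2.1} (with $A = \mathbb{C}$) then produces global $h_0, h_1, \ldots, h_{n-1} \in H^\infty(\mathbb{D}^n)$ with $gh_0 + \sum_{j=1}^{n-1} B_jh_j = 1$ on $\mathbb{D}^n$; evaluating Gelfand transforms at $\eta$ gives $0=1$, the desired contradiction. This establishes $\pi_n^{-1}(\xi) \subset \operatorname{Im}(R^*)$.

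Finally, the equality $\pi_n^{-1}(\xi) = F_\xi$ is immediate from $\operatorname{Im}(R^*) = \operatorname{cl}(\zeta\times\mathbb{D}) \subset \operatorname{cl}(\mathbb{D}^n)$. The main obstacle is the pseudohyperbolic estimate above: one must pick a single $\epsilon$ that simultaneously controls the sub-level sets of all $n-1$ Blaschke products while preserving the lower bound for $|g|$, so the interplay between the separation constants of the sequences $\zeta_j$ and $\|g\|_\infty$ has to be tracked carefully. Once that estimate is in place, the rest of the argument is a clean application of Theorem~\ref{te2.1}.
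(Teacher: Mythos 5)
Your proof is correct in outline, but it takes a genuinely different route from the paper, so let me compare, and then flag one delicate point.

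The paper's argument is essentially algebraic: it observes that $F_\xi \subset \operatorname{cl}(\zeta\times\mathbb{D})$, hence every $f\in I=\ker R$ has $\hat f\equiv 0$ on $F_\xi$ and therefore, by Theorem~\ref{teo2.2}, on its $H^\infty$-hull $\pi_n^{-1}(\xi)$. Thus every $\eta\in\pi_n^{-1}(\xi)$ annihilates $I$ and factors through $H^\infty(\mathbb{D}^n)/I\cong H^\infty(\mathbb{D}\times\mathbb{N})$. The equality $\pi_n^{-1}(\xi)=F_\xi$ then follows exactly as you argue, from $\operatorname{Im}R^*=\operatorname{cl}(\zeta\times\mathbb{D})\subset\operatorname{cl}(\mathbb{D}^n)$. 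Your proof replaces the first step by a direct corona-type contradiction: separate $\eta$ from $\operatorname{Im}R^*$ by $H^\infty$-convexity (which here you could even take with $\sup_{\operatorname{Im}R^*}|\hat f|=0$, since $H^\infty$-convexity of $\operatorname{Im}R^*$ comes precisely from $\ker R$), then use Hoffman's lemma and a coordinatewise Schwarz--Pick estimate to build a finite cover of $[M(H^\infty(\mathbb{D}))]^n$ on which the tuple $(g,B_1,\dots,B_{n-1})$ admits local B\'ezout solutions, and finally invoke Theorem~\ref{te2.1}. This avoids Theorem~\ref{teo2.2} entirely, which is a genuine structural difference; the price is the analytic step transferring the lower bound on $|g|$ from $\zeta\times\mathbb{D}$ to the sub-level set $U_0$.

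That transfer is the one place where care is required. You know $|g|\ge 1-s$ only on $\zeta\times\mathbb{D}$, and to get $|g|\ge(1-s)/2$ on $U_0=\{z:|B_{\zeta_j}(z_j)|<\epsilon,\ j\le n-1\}$ you need every $z\in U_0$ to lie within pseudohyperbolic distance $r(\epsilon)\to 0$ of $\zeta\times\mathbb{D}$. Hoffman's lemma gives, for each $j$, an index $k_j$ with $\rho(z_j,z_{j,k_j})<r(\epsilon)$; the nearby point is then $(z_{1,k_1},\dots,z_{n-1,k_{n-1}},z_n)$. This lies in $\zeta\times\mathbb{D}$ only if $\zeta$ is understood as the full \emph{Cartesian} product $\zeta_1\times\cdots\times\zeta_{n-1}$ (re-indexed by $\mathbb{N}$), so that the coordinate indices $k_1,\dots,k_{n-1}$ may be chosen independently. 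If instead $\zeta$ were the ``diagonal'' $\{(z_{1,k},\dots,z_{n-1,k})\}_k$ with a common index $k$, the estimate would fail, because nothing forces $k_1=\cdots=k_{n-1}$. The paper explicitly describes $\zeta$ as ``the Cartesian product of interpolating sequences'', so your argument is consistent with that reading; just be aware that the estimate is genuinely tied to it, whereas the paper's $\ker R$ argument is indifferent to it. With that understood, the proof is sound: the cover $\widehat U_0,\dots,\widehat U_{n-1}$ is indeed a cover, the local data are bounded holomorphic, Theorem~\ref{te2.1} with $A=\mathbb{C}$ applies, and evaluating the global B\'ezout identity at $\eta$ (where $\hat g(\eta)=0$ and $\widehat{B_j}(\eta)=\widehat{B_{\zeta_j}}(\xi_j)=0$) yields the contradiction.
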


\begin{proof}
Let us prove first that \(\pi_n^{-1}(\xi) \subset \operatorname{im} R^*\).
Set
\[
I := \ker R
=
\bigl\{ f \in H^\infty(\mathbb{D}^n) : f(z_k, z) = 0 \ \forall\,k\in\mathbb{N},\ \forall z\in\mathbb{D} \bigr\}.
\]
Let \(U\) be an open neighborhood of
\[
F_\xi := \pi_n^{-1}(\xi) \cap \mathrm{cl}(\mathbb{D}^n)
\]
in \(M\bigl(H^\infty(\mathbb{D}^n)\bigr)\). As observed above, we may assume without loss of generality that
\[
U=\pi_n^{-1}(V)\cap \mathrm{cl}(\mathbb{D}^n)
\]
for some open neighborhood \(V\) of \(\xi\) in \(\bigl[M(H^\infty(\mathbb{D}))\bigr]^n\).
Since the closure of \(\zeta \times \mathbb{D}\) in
\(\bigl[M(H^\infty(\mathbb{D}))\bigr]^n\) contains \(\xi\),
there exists a subsequence \(\zeta_U\subset \zeta\) such that
\(\zeta_U\times \mathbb{D}\subset V\).
Varying \(U\) over a basis of open neighborhoods of \(F_\xi\), we conclude that 
\[
F_\xi
\subset
\mathrm{cl}(\zeta\times\mathbb{D})
\qquad
\text{(recall that \(\mathrm{cl} = \mathrm{cl}_{M(H^\infty(\mathbb{D}^n))}\))}.
\]

Thus, the Gelfand transform of every \(f\in I\) vanishes on \(F_\xi\), and hence on its
\(H^\infty\)\nobreakdash-hull. By Theorem~\ref{teo2.2} this hull equals \(\pi_n^{-1}(\xi)\).
Therefore every \(\eta\in\pi_n^{-1}(\xi)\) annihilates \(I\) and thus factors through
\[
H^\infty(\mathbb{D}^n)/I\simeq H^\infty(\mathbb{D}\times\mathbb{N}),
\]
i.e.,
\[
\eta=R^*(\varphi)
\qquad\text{for some }\varphi\in M\!\bigl(H^\infty(\mathbb{D}\times\mathbb{N})\bigr).
\]
This proves \(\pi_n^{-1}(\xi)\subset \operatorname{im}R^*\).
Using this fact, let us prove that
\(\pi_n^{-1}(\xi)=F_\xi\).
\smallskip

Indeed, the Corona Theorem for \(H^\infty(\mathbb{D}\times\mathbb{N})\) 
(which follows from uniform bounds in the proof of Carleson's original theorem, see, e.g., \cite{Garnett1981})
implies that \(\mathbb{D}\times\mathbb{N}\) is an open dense subset of
\(M(H^\infty(\mathbb{D}\times\mathbb{N}))\). By construction of \(R^*\),
\[
R^*(\mathbb{D}\times\mathbb{N}) = \zeta\times\mathbb{D}.
\]
Since \(R^*\) is continuous and injective, it follows that
\[
R^*\bigl(M(H^\infty(\mathbb{D}\times\mathbb{N}))\bigr)
=
\mathrm{cl}\bigl(\zeta\times\mathbb{D}\bigr)
\subset \mathrm{cl}(\mathbb{D}^n).
\]
Hence
\[
\pi_n^{-1}(\xi)
=
\pi_n^{-1}(\xi)\cap
R^*\bigl(M(H^\infty(\mathbb{D}\times\mathbb{N}))\bigr)
\subset
\pi_n^{-1}(\xi)\cap \mathrm{cl}(\mathbb{D}^n)
=
F_\xi.
\]
Thus, \(\pi_n^{-1}(\xi)=F_\xi\), completing the proof of the lemma.
\end{proof}

By Lemma~\ref{lem5.1} and Theorem~\ref{teo2.2} we conclude that
\(M(A(F_\xi)) = F_\xi\).
\medskip

We now use the known structure of 
\(M\bigl(H^\infty(\mathbb{D}\times\mathbb{N})\bigr)\)
to analyze the internal geometry of \(F_\xi\).
As shown in \cite{Brudnyi2021}, every Gleason part in 
\(M\bigl(H^\infty(\mathbb{D}\times\mathbb{N})\bigr)\)
is either a singleton or an analytic disk; the set of nontrivial parts is open and dense,
the set of trivial parts is totally disconnected, and
\[
\dim M\bigl(H^\infty(\mathbb{D}\times\mathbb{N})\bigr)=2.
\]
We  relate this structure to the fibers of the projection \(T^*\) introduced below.

\smallskip 
The Banach algebra homomorphism
\begin{equation}\label{e5.3}
T \colon \ell^\infty \longrightarrow H^\infty(\mathbb{D}\times\mathbb{N}), \qquad
T\bigl((c_k)_{k\in\mathbb{N}}\bigr) := (g_k)_{k\in\mathbb{N}}, \quad g_k(z) \equiv c_k,\ z\in\mathbb{D},
\end{equation}
is an isometric embedding. Identifying \(M(\ell^\infty)\) with the Stone–Čech compactification \(\beta\mathbb{N}\), its transpose
\[
T^* \colon M\bigl(H^\infty(\mathbb{D}\times\mathbb{N})\bigr) \longrightarrow \beta\mathbb{N}
\]
is surjective. For characters corresponding to points \((z,n)\in \mathbb{D}\times\mathbb{N}\),
the map \(T^*\) assigns evaluation at the index \(n\); in particular,
\[
T^*\bigl(\mathbb{D}\times\mathbb{N}\bigr)=\mathbb{N}\subset \beta\mathbb{N}.
\]

\smallskip\noindent
Fix \(\chi \in \beta\mathbb{N}\setminus\mathbb{N}\).
If \(G \subset M \bigl(H^\infty(\mathbb{D}\times\mathbb{N})\bigr)\) is an analytic disk with \(T^*(G)\) containing \(\chi\), then, since \(T^*\) is continuous and
\(\beta\mathbb{N}\) is totally disconnected, the connected set \(T^*(G)\) must be a singleton. Hence \(T^*(G)=\{\chi\}\), so \(G \subset (T^*)^{-1}(\chi)\).
Consequently, for every \(\chi \in \beta\mathbb{N}\setminus\mathbb{N}\), the fiber \((T^*)^{-1}(\chi)\) is a disjoint union of analytic disks and points belonging to trivial Gleason parts of \(M \bigl(H^\infty(\mathbb{D}\times\mathbb{N})\bigr)\) (each such part is a singleton).

\begin{lemma}\label{lem5.2}
Let \(G\) be an analytic disk in \(M\bigl(H^\infty(\mathbb{D}\times\mathbb{N})\bigr)\) such that
\(R^*(G)\) contains a point \(\eta \in F_{\xi} = \pi_n^{-1}(\xi)\),
where \(\xi = (\xi^{\,n-1},\xi_n) \in M_a^{n-1}\times M_s \subset \bigl[M(H^\infty(\mathbb{D}))\bigr]^n\).
Then
\[
R^*(G) \subset  F_\xi.
\]
\end{lemma}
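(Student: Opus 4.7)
The plan is to show that the composition $\pi_n \circ R^*$ is constantly equal to $\xi$ on $G$, from which $R^*(G) \subset \pi_n^{-1}(\xi) = F_\xi$ follows at once. I would analyze this coordinate by coordinate using the projections $p_j \colon [M(H^\infty(\mathbb{D}))]^n \to M(H^\infty(\mathbb{D}))$, and pick a distinguished point $\varphi_0 \in G$ with $R^*(\varphi_0) = \eta$, so that $\pi_n(R^*(\varphi_0)) = \xi = (\xi_1,\dots,\xi_n)$.

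For each $j = 1, \dots, n-1$, observe that the coordinate embedding $\iota_j \colon H^\infty(\mathbb{D}) \hookrightarrow H^\infty(\mathbb{D}^n)$, $\iota_j(g)(z_1,\dots,z_n) := g(z_j)$, satisfies $R(\iota_j(g))(k,w) = g((z_k)_j)$, which is constant in $w$. Hence $R \circ \iota_j = T \circ \alpha_j$, where $T \colon \ell^\infty \to H^\infty(\mathbb{D}\times\mathbb{N})$ is the embedding \eqref{e5.3} and $\alpha_j(g) := (g((z_k)_j))_{k \in \mathbb{N}}$. Taking transposes gives $p_j \circ \pi_n \circ R^* = \alpha_j^* \circ T^*$. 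The plan is then to exploit that $G = \tau(\mathbb{D})$ is connected while $\beta\mathbb{N}$ is totally disconnected: the continuous map $T^*|_G$ must be constant, and so is $p_j \circ \pi_n \circ R^*|_G$. Evaluating at $\varphi_0$ yields $p_j(\pi_n(R^*(\varphi))) = \xi_j$ for every $\varphi \in G$ and every $j \le n-1$.

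The main obstacle is the $n$-th coordinate, because the analogous map $S \colon H^\infty(\mathbb{D}) \to H^\infty(\mathbb{D}\times\mathbb{N})$, $g \mapsto (g,g,\dots)$, does \emph{not} factor through a totally disconnected space. Here I would argue inside the analytic disk $G$ via Schwarz--Pick. Write $\varphi_0 = \tau(\lambda_0)$. For any $g \in H^\infty(\mathbb{D})$, the function $h := \hat g \circ S^* \circ \tau = \widehat{S(g)} \circ \tau$ belongs to $H^\infty(\mathbb{D})$ by the defining analyticity property of $\tau$. Restricting to $g$ with $\hat g(S^*(\varphi_0)) = 0$ and $\|g\|_\infty \le 1$, one has $h(\lambda_0) = 0$ and $\|h\|_\infty \le 1$, so Schwarz--Pick yields $|h(\lambda)| \le \rho(\lambda, \lambda_0) < 1$ for every $\lambda \in \mathbb{D}$. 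Taking the supremum over such $g$ and invoking the formula \eqref{e2.6}, we obtain $\rho(S^*(\varphi_0), S^*(\tau(\lambda))) < 1$, so $S^*(\tau(\lambda))$ lies in the Gleason part of $S^*(\varphi_0) = \xi_n$. Since $\xi_n \in M_s$ has trivial (singleton) Gleason part, this forces $S^*(\tau(\lambda)) = \xi_n$ for every $\lambda \in \mathbb{D}$, i.e.\ $p_n \circ \pi_n \circ R^*|_G \equiv \xi_n$.

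Combining the two cases, $\pi_n \circ R^*$ is identically $\xi$ on $G$, hence $R^*(G) \subset F_\xi$, as required. The essential difficulty lies in the $n$-th coordinate: for $j \le n-1$ the target factors through a totally disconnected space and mere connectedness of $G$ is enough, whereas for $j = n$ the argument must combine the analytic structure of the disk $G$ with the fact that $\xi_n$ sits in $M_s$ and therefore has a trivial Gleason part.
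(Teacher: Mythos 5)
Your proposal is correct and follows the same two-step strategy as the paper: the first $n-1$ coordinates of $\pi_n\circ R^*$ are constant on $G$ because they factor through $T^*$ into the totally disconnected space $\beta\mathbb{N}$, and the last coordinate is constant because $\xi_n$ lies in $M_s$ and so has a trivial Gleason part. The only place where you genuinely depart from the paper is the treatment of the $n$-th coordinate. The paper asserts that $(p_n\circ\pi_n)(R^*(G))$ is an analytic disk through $\xi_n$ and appeals to Hoffman's theorems to conclude it must be a singleton; you instead re-derive the decisive fact from first principles, using the analyticity property of $\tau$ together with the Schwarz--Pick lemma and the definition \eqref{e2.6} of $\rho$ to show directly that $\rho\bigl(S^*(\varphi_0),\,S^*(\tau(\lambda))\bigr)\le\rho(\lambda,\lambda_0)<1$ for every $\lambda\in\mathbb D$, so that the image sits inside the Gleason part of $\xi_n$. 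This is a more elementary, self-contained justification of the step the paper leaves to the citation, and it also sidesteps the slightly loose phrasing in the paper (the image is not literally an analytic disk when it is a point). Your coordinate-wise bookkeeping for $j\le n-1$ via $\iota_j$ and $\alpha_j$ is an equivalent repackaging of the paper's commutative diagram.
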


\begin{proof}
Let 
\[
p^{\,n-1}\colon \mathbb{D}^n \to \mathbb{D}^{\,n-1}
\]
be the projection onto the first \(n-1\) coordinates. It induces the following algebra homomorphisms.
First, the pullback
\begin{equation}\label{e5.4}
(p^{\,n-1})^*\colon H^\infty(\mathbb{D}^{\,n-1})
\longrightarrow H^\infty(\mathbb{D}^n),\qquad
(p^{\,n-1})^*(f)=f\circ p^{\,n-1},
\end{equation}
whose transpose is the map
\[
\hat{p}^{\,n-1}\colon M\bigl(H^\infty(\mathbb{D}^n)\bigr)
\longrightarrow M\bigl(H^\infty(\mathbb{D}^{\,n-1})\bigr),\qquad
\hat{p}^{\,n-1}(\varphi)=\varphi\circ (p^{\,n-1})^*,
\]
and \(\hat{p}^{\,n-1}|_{\mathbb{D}^n}=p^{\,n-1}\).

 Second, the restriction
\[
(p^{\,n-1})^*\big|_{\widehat{\otimes}_\varepsilon^{\,n-1} H^\infty(\mathbb{D})}
\;\colon\;
\widehat{\otimes}_\varepsilon^{\,n-1} H^\infty(\mathbb{D})
\longrightarrow
\widehat{\otimes}_\varepsilon^{\,n} H^\infty(\mathbb{D})
\]
whose transpose
\[
\bar{p}^{\,n-1}\colon
\bigl[M\bigl(H^\infty(\mathbb{D})\bigr)\bigr]^n
\longrightarrow
\bigl[M\bigl(H^\infty(\mathbb{D})\bigr)\bigr]^{\,n-1},
\]
coincides with the projection onto the first \(n-1\) coordinates.

The embeddings
\(\widehat{\otimes}_\varepsilon^{\,k} H^\infty(\mathbb{D})
\hookrightarrow H^\infty(\mathbb{D}^k)\)
are compatible with the pullback \((p^{\,n-1})^*\). Passing to transposes (see \eqref{e2.2}) yields
\begin{equation}\label{e5.5}
\bar{p}^{\,n-1}\circ \pi_{n}
\;=\;
\pi_{\,n-1}\circ \hat{p}^{\,n-1}.
\end{equation}

Let \(\zeta = \{z_k\} \subset \mathbb{D}^{\,n-1}\) be the interpolating sequence fixed at the beginning of the proof, whose closure in \(\bigl[M(H^\infty(\mathbb{D}))\bigr]^{n-1}\) contains \(\xi^{\,n-1}\). Define the restriction homomorphism
\[
R_\zeta\colon H^\infty(\mathbb{D}^{\,n-1}) \longrightarrow \ell^\infty(\mathbb{N}),\qquad
R_\zeta(f)=\bigl(f(z_k)\bigr)_{k\in\mathbb{N}}.
\]

By the definitions of \(R\), \(T\), and \((p^{\,n-1})^*\), see \eqref{e5.2}--\eqref{e5.4}, we have the commutative relation
\[
T\circ R_\zeta \;=\; R\circ (p^{\,n-1})^* .
\]
Passing to transposes and combining with \eqref{e5.5} gives the commutative diagram:
\begin{equation}\label{e5.6}
\begin{tikzcd}[column sep=8em, row sep=3.5em]
M\bigl(H^\infty(\mathbb{D}\times\mathbb{N})\bigr)
  \arrow[r, "T^*"]
  \arrow[d, "R^*"']
&
\beta\mathbb{N}
  \arrow[d, "(R_\zeta)^*"]
\\
M\bigl(H^\infty(\mathbb{D}^n)\bigr)
  \arrow[r, "\hat{p}^{\,n-1}"']
  \arrow[d, "\pi_n"']
&
M\bigl(H^\infty(\mathbb{D}^{\,n-1})\bigr)
  \arrow[d, "\pi_{\,n-1}"]
\\
\bigl[M\bigl(H^\infty(\mathbb{D})\bigr)\bigr]^n
  \arrow[r, "\bar{p}^{\,n-1}"']
&
\bigl[M\bigl(H^\infty(\mathbb{D})\bigr)\bigr]^{\,n-1}
\end{tikzcd}
\end{equation}

Since \(R^*(G)\subset M\bigl(H^\infty(\mathbb{D}^n)\bigr)\) meets the fibre \(F_\xi=\pi_n^{-1}(\xi)\) and \(T^*\) is constant on \(G\), the diagram \eqref{e5.6} gives, for \(\eta\in R^*(G)\cap F_\xi\),
\begin{equation}\label{e5.7}
(\pi_{\,n-1}\circ \hat{p}^{\,n-1})(\eta)=\xi^{\,n-1}.
\end{equation}
Let \(T^*(G)=\{\omega\}\subset\beta\mathbb N\). Then the top square in \eqref{e5.6} yields
\[
\hat{p}^{\,n-1}\bigl(R^*(G)\bigr)=\{(R_\zeta)^*(\omega)\}.
\]
Applying \(\pi_{\,n-1}\) and using \eqref{e5.7}, we obtain
\[
(\pi_{\,n-1}\circ \hat{p}^{\,n-1})\bigl(R^*(G)\bigr)=\{\xi^{\,n-1}\}.
\]

For the last coordinate, let
\[
p_n:\bigl[M(H^\infty(\mathbb{D}))\bigr]^n\longrightarrow M(H^\infty(\mathbb{D}))
\]
be the projection onto the \(n\)-th factor. Then \((p_n\circ \pi_n)\bigl(R^*(G)\bigr)\) is an analytic disk in \(M(H^\infty(\mathbb{D}))\) passing through \(\xi_n\). Since \(\xi_n\in M_s\) is a trivial Gleason part, Hoffman’s results \cite[Thm.~4.3, 5.5, 5.6]{Hoffman1967} imply that this image is a singleton:
\[
(p_n\circ \pi_n)\bigl(R^*(G)\bigr)=\{\xi_n\}.
\]

Combining the two displays gives
\[
\pi_n\bigl(R^*(G)\bigr)=\{(\xi^{\,n-1},\xi_n)\}=\{\xi\},
\]
hence \(R^*(G)\subset \pi_n^{-1}(\xi)=F_\xi\), as claimed.
\end{proof}

Combining Lemmas~\ref{lem5.1}--\ref{lem5.2}
with the structure of $M\bigl(H^\infty(\mathbb{D}\times\mathbb{N})\bigr)$
established in \cite{Brudnyi2021}, we obtain that
$F_\xi$ is a disjoint union of its Gleason parts, each of which is either
a single point or an analytic disk.
Moreover, since $F_\xi$ embeds into 
$M\bigl(H^\infty(\mathbb{D}\times\mathbb{N})\bigr)$,
its covering dimension satisfies
\[
\dim F_\xi \le 2.
\]

To complete the proof of part~\textup{(ii)}, it remains to show that
the subset of trivial Gleason parts in \(F_\xi\) is nonempty.
Equivalently, if the fiber \(F_\xi\) contains a nontrivial Gleason part (an analytic disk),
then it must also contain at least one trivial Gleason part.

\begin{lemma}\label{lem5.3}
The closure in \(M\bigl(H^\infty(\mathbb{D}\times\mathbb{N})\bigr)\)
of any nontrivial Gleason part of \(H^\infty(\mathbb{D}\times\mathbb{N})\)
contains a trivial Gleason part of this algebra.
\end{lemma}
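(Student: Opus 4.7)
Plan: Let $P$ be a nontrivial Gleason part of $H^\infty(\mathbb{D}\times\mathbb{N})$. I would first localise $P$ in a single fiber of the projection $T^*$ of \eqref{e5.3}. Given any two distinct $\chi_1,\chi_2\in\beta\mathbb{N}$, the characteristic function of a clopen subset of $\beta\mathbb{N}$ separating them is a norm-one element of $\ell^\infty$ whose image under $T$ takes the values $0$ and $1$ on the respective fibers, so the Gleason distance between points in different fibers of $T^*$ equals $1$. Hence $T^*(P)=\{\chi\}$ for a unique $\chi\in\beta\mathbb{N}$, and the argument splits according to whether $\chi\in\mathbb{N}$ or $\chi\in\beta\mathbb{N}\setminus\mathbb{N}$.

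\emph{Case 1 ($\chi=m\in\mathbb{N}$).} The zero-extension map $g\mapsto(\delta_{km}g)_{k}$ realises $H^\infty(\mathbb{D})$ as a norm-one retract of $H^\infty(\mathbb{D}\times\mathbb{N})$ and identifies $(T^*)^{-1}(m)$ with $M(H^\infty(\mathbb{D}))$ isometrically in the Gleason metric; the conclusion then follows from the classical Hoffman result that the closure in $M(H^\infty(\mathbb{D}))$ of any nontrivial Gleason part meets $M_s$, together with the fact that a singleton Gleason part of $H^\infty(\mathbb{D})$ in $(T^*)^{-1}(m)$ remains a singleton Gleason part of $H^\infty(\mathbb{D}\times\mathbb{N})$ by the same isometric identification.

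\emph{Case 2 ($\chi\in\beta\mathbb{N}\setminus\mathbb{N}$).} I would parametrise $P$ by $\tau\colon\mathbb{D}\to P$ in the ultrafilter form $\tau(z)(f)=\omega\text{-}\lim_n f_n(z)$ supplied by \cite{Brudnyi2021}; the diagonal embedding $\iota\colon H^\infty(\mathbb{D})\to H^\infty(\mathbb{D}\times\mathbb{N})$, $g\mapsto(g,g,\dots)$, of norm one is then a section of the pullback homomorphism $\tau^*(f):=\hat f\circ\tau$, so $\tau^*$ is an isometric quotient map, its transpose $\tau^{**}$ is a continuous injection $M(H^\infty(\mathbb{D}))\to M(H^\infty(\mathbb{D}\times\mathbb{N}))$ with image $\bar P$, and a direct computation using the section shows that $\tau^{**}$ is a Gleason-metric isometry. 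Choosing any $\eta\in M_s$ then yields a point $\tau^{**}(\eta)\in\bar P$ at Gleason distance $1$ from every other point of $\bar P$.

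The main obstacle is the final step: ruling out the existence of a character $\nu\in M(H^\infty(\mathbb{D}\times\mathbb{N}))\setminus\bar P$ at Gleason distance strictly less than one from $\tau^{**}(\eta)$. Such $\nu$ would necessarily lie in $(T^*)^{-1}(\chi)$ and, by \cite{Brudnyi2021}, on a nontrivial analytic disk $\widetilde Q$ passing through $\tau^{**}(\eta)$. I would attack this either by showing that the $H^\infty$-convex set $\bar P = Z(\ker\tau^*)$ is saturated under the Gleason equivalence---equivalently, that any analytic disk meeting $\bar P$ is contained in $\bar P$---or by applying the same section-and-isometry analysis to a parametrisation of $\widetilde Q$ (again of ultrafilter form) and deriving a contradiction with $\eta\in M_s$ from the resulting Gleason isometry and the fact that $\bar P\cap\widetilde Q$ would then contain a Gleason-neighbourhood of $\tau^{**}(\eta)$ in $\widetilde Q$.
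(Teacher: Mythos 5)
The proposal is incomplete, and you are candid about the main gap; but there is also an error earlier in Case 2. You assume every nontrivial Gleason part lying over \(\chi\in\beta\mathbb{N}\setminus\mathbb{N}\) is parametrised in the ultrafilter form \(\tau(z)(f)=\chi\text{-}\lim_n f_n(z)\). That form describes only the single part centred at the character \(f\mapsto\chi\text{-}\lim_n f_n(0)\). A general nontrivial part over \(\chi\) is parametrised by \(\tau(z)(f)=\chi\text{-}\lim_n f_n(\varphi_n(z))\) for a sequence of M\"obius maps \(\varphi_n\) that typically drift to the boundary of \(\mathbb{D}\), and then the diagonal \(\iota(g)=(g,g,\dots)\) is not a section of \(\tau^*\): one gets \(\tau^*(\iota(g))(z)=\chi\text{-}\lim_n g(\varphi_n(z))\neq g(z)\) in general. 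Consequently the claimed Gleason isometry \(\tau^{**}\colon M(H^\infty(\mathbb{D}))\to\bar P\) does not follow for an arbitrary nontrivial part, and the reduction to choosing \(\eta\in M_s\) does not go through in the required generality.

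Even where the isometry does hold, the final step you identify --- ruling out a character \(\nu\notin\bar P\) at Gleason distance \(<1\) from \(\tau^{**}(\eta)\) --- is precisely where the real content lies, and you leave it open. Saturation of \(\bar P\) under the Gleason relation is far from automatic; in fact showing that \(\overline{G}\) cannot be swept out by analytic disks is exactly what the paper's whole proof is devoted to, so what you defer is the hard part. Your Case~1 also invokes as ``the classical Hoffman result'' the statement that the closure of any nontrivial Gleason part of \(H^\infty(\mathbb{D})\) meets \(M_s\); this is exactly the one-disk case of the lemma, and it is not stated in Hoffman's paper in this form. Appealing to it makes Case~1 essentially circular.

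The paper's actual proof is quite different and avoids parametrisations and the fiber dichotomy entirely. It argues by contradiction: if \(\overline G\) consisted only of nontrivial parts, then for each \(f\) the set \(\mathcal C_{\hat f}\) of analytic disks in \(\overline G\) on which \(\hat f\) is constant is nonempty (maximum of \(|\hat f|\) on the compact set \(\overline G\), plus subharmonicity of \(|\hat f\circ\pi|\) along the Hoffman parametrisation) and compact (via the local \(S\times\mathbb{D}\) product structure of the set of nontrivial parts from \cite{Brudnyi2021}); finite intersections are nonempty by the same maximum principle applied to \(\sum_j|\hat f_j|^2\); and the finite intersection property then yields a disk on which all \(\hat f\) are constant, contradicting that \(\widehat{H^\infty(\mathbb{D}\times\mathbb{N})}\) separates points of its maximal ideal space. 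This handles both \(\chi\in\mathbb{N}\) and \(\chi\in\beta\mathbb{N}\setminus\mathbb{N}\) uniformly and is self-contained. If you want to salvage your approach, you would need (a) to establish the Gleason isometry without the restrictive parametrisation assumption, and (b) to supply an argument for the saturation step --- most likely something along the lines of the paper's maximum-principle/compactness argument, at which point the two proofs converge.
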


\begin{proof}
It was shown in~\cite[Lemma~2]{Be} that the algebra 
\(H^\infty(\mathbb{D}\times\mathbb{N})\) is logmodular on 
\(\mathbb{D} \times \mathbb{N}\).
This, together with the general result of 
K.~Hoffman~\cite{Hoffman1962}, implies that the Gleason parts of 
\(H^\infty(\mathbb{D}\times\mathbb{N})\) are either points or analytic disks.
The latter means that there exists a continuous injective map 
\[
\pi:\mathbb{D}\longrightarrow M\bigl(H^\infty(\mathbb{D}\times\mathbb{N})\bigr)
\]
onto this Gleason part such that 
\(\hat f\circ\pi \in H^\infty(\mathbb{D})\) 
for all \(f\in H^\infty(\mathbb{D}\times\mathbb{N})\).

Suppose, to the contrary, that there exists a nontrivial Gleason part 
\(G \subset M\bigl(H^\infty(\mathbb{D}\times\mathbb{N})\bigr)\) 
whose closure \(\overline{G}\) consists entirely of nontrivial Gleason parts, 
so that \(\overline{G}\) is a disjoint union of analytic disks.

For \(f\in H^\infty(\mathbb{D}\times\mathbb{N})\), let 
\(\mathcal{C}_{\hat f}\) denote the set of all analytic disks
in \(\overline{G}\) on which the Gelfand transform \(\hat f\) is constant.

\smallskip

\noindent\textsl{Claim.}
Each \(\mathcal{C}_{\hat f}\) is a nonempty compact subset of \(\overline{G}\).

\smallskip

Indeed, the function \(|\hat f|\), being continuous on the compact set \(\overline{G}\),
attains its maximum at some point \(\eta\).
Let \(G_\eta\) be the nontrivial Gleason part containing \(\eta\),
and let \(\pi:\mathbb{D}\to G_\eta\) be its analytic parametrization
(as in the above definition).
Then the function \(|\hat f \circ \pi|\) is subharmonic on \(\mathbb{D}\)
and attains its maximum at the interior point \(\pi^{-1}(\eta)\).
By the maximum modulus principle, it must be constant,
and hence \(\hat f \circ \pi\), and therefore \(\hat f\),
is constant on \(G_\eta\).
Thus \(G_\eta \subset \mathcal{C}_{\hat f}\),
showing that \(\mathcal{C}_{\hat f}\) is nonempty.

\smallskip

Next, we show that \(\mathcal{C}_{\hat f}\) is compact.
Let \(\theta\) be a limit point of \(\mathcal{C}_{\hat f}\).
By the hypothesis on \(\overline{G}\), the point \(\theta\) belongs to some
maximal analytic disk, i.e., to the Gleason part of \(\theta\).
By~\cite[Thm.\,2.5\,(b),(d)]{Brudnyi2021},
there exists an open neighborhood \(U\) of \(\theta\) in the set of nontrivial
Gleason parts of \(H^\infty(\mathbb{D}\times\mathbb{N})\),
and a homeomorphism
\[
h : S\times \mathbb{D} \longrightarrow U,
\]
where \(S\) is a compact subset of the Stone–\v{C}ech compactification
\(\beta\mathbb{N}\),
such that each slice \(h(\{\omega\}\times \mathbb{D})\) lies in a nontrivial
Gleason part of \(H^\infty(\mathbb{D}\times\mathbb{N})\),
and for every \(f\in H^\infty(\mathbb{D}\times\mathbb{N})\) the pullback
\(\hat f\circ h(\omega,\cdot)\) belongs to \(H^\infty(\mathbb{D})\).

Since \(\theta\) is a limit point of \(\mathcal{C}_{\hat f}\),
there exists a net \((\theta_\alpha)\subset
\mathcal{C}_{\hat f}\cap U\) converging to \(\theta\).
If \(\theta_\alpha\) lies in the image under \(h\)
of the slice corresponding to \(\omega_\alpha\),
then \(\hat f\) is constant on this image.
Passing to the limit within \(U\) shows that \(\hat f\) is
constant on the image under \(h\) of the slice containing \(\theta\).
By analyticity, \(\hat f\) is constant on the entire Gleason part
containing \(\theta\); hence \(\theta\in\mathcal{C}_{\hat f}\).

Thus \(\mathcal{C}_{\hat f}\) is closed in \(\overline{G}\),
and compactness follows since \(\overline{G}\) is compact.

\smallskip

Further, given any finite collection \(f_1,\dots,f_k \in H^\infty(\mathbb{D}\times\mathbb{N})\),
apply the maximum modulus principle to the continuous function
\[
\hat{F} := \sum_{j=1}^k |\hat{f_j}|^2
\]
restricted to \(\overline{G}\).
As in the case of a single function considered in the proof of the \textsl{Claim},
the function \(|\hat{F}|\), being continuous on the compact set \(\overline{G}\),
attains its maximum at some point \(\eta\).
Let \(G_\eta\) be the nontrivial Gleason part containing \(\eta\),
and let \(\pi:\mathbb{D}\to G_\eta\) be its analytic parametrization. 
Then the function \(|\hat{F}\circ \pi|\) is subharmonic on \(\mathbb{D}\)
and attains its maximum at the interior point \(\pi^{-1}(\eta)\).
By the maximum modulus principle, it must be constant,
and therefore \(\hat{F}\circ \pi\),
being a holomorphic map from \(\mathbb{D}\) into a Euclidean sphere in \(\mathbb{C}^k\),
is constant. Hence each \(\hat{f_j}\) is constant on \(G_\eta\),
and consequently
\[
\mathcal{C}_{\hat{f_1}} \cap \cdots \cap \mathcal{C}_{\hat{f_k}} \neq \varnothing .
\]

\smallskip

Since each \(\mathcal{C}_{\hat f}\) is compact and all finite intersections are nonempty,
the finite intersection property implies that
\[
\bigcap_{f\in H^\infty(\mathbb{D}\times\mathbb{N})} \mathcal{C}_{\hat f} \neq \varnothing .
\]
Hence there exists an analytic disk in \(\overline{G}\)
on which \(\hat f\) is constant for every
\(f \in H^\infty(\mathbb{D}\times\mathbb{N})\).
This yields a contradiction, since the Gelfand transforms of functions from the algebra 
\(H^\infty(\mathbb{D}\times\mathbb{N})\) separate points of its maximal ideal space.
Therefore \(\overline{G}\) cannot consist solely of analytic disks, 
that is, it must contain a trivial Gleason part, as claimed.
\end{proof}

\smallskip

Since \(F_\xi\) is contained in the homeomorphic image of 
\(M\bigl(H^\infty(\mathbb{D}\times\mathbb{N})\bigr)\) 
under \(R^*\), which preserves the analytic structure,
the subset of trivial Gleason parts in \(F_\xi\) is nonempty.
This completes the proof of part~\textup{(ii)} of Theorem~\ref{teo2.3}.
\end{proof}

\begin{remark}\label{rem5.4}
It is not difficult to construct points \(\xi\) as in part~\textup{(ii)} 
for which the corresponding fibers \(F_\xi\) contain analytic disks.
Whether this holds for all such \(\xi\) remains an open question.
We conjecture that every fiber of this type contains analytic disks.
\end{remark}

\section{Proof of Theorem \ref{te2.5}}
We begin with the following auxiliary result.

\begin{lemma}\label{lem6.1}
Let \(H=(H_1,\dots,H_m)\in[H^\infty(\mathbb D^n)]^m\). The following are equivalent:

\smallskip
\noindent\textup{(1)}
For each \(x\in\mathbb D^n\) there exist a neighborhood \(U_x\subset\mathbb D^n\)
of \(x\) and a one-dimensional complex analytic subset
\(\Sigma_x\subset\mathbb C^m\) such that \(H(U_x)\subset\Sigma_x\).

\smallskip
\noindent\textup{(2)}
There exist holomorphic maps
\[
h:\mathbb D^n\to\mathbb D,\qquad 
\widetilde H:\mathbb D\to\mathbb C^m
\]
such that \(H=\widetilde H\circ h\).
\end{lemma}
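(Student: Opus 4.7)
The implication $(2)\Rightarrow(1)$ is straightforward. If $\widetilde H$ is globally constant, then $H$ is constant and $(1)$ holds trivially (take $\Sigma_x$ to be any line through $H(x)$). Otherwise, by the identity theorem $\widetilde H$ is nowhere locally constant, so for any $x\in\mathbb D^n$ the germ of $\widetilde H$ at $t_0:=h(x)$ parametrizes an irreducible one-dimensional analytic germ at $\widetilde H(t_0)\in\mathbb C^m$; extending this germ to a one-dimensional analytic subset $\Sigma_x$ of a neighborhood of $H(x)$ gives $H(U_x)\subset\Sigma_x$ for $U_x:=h^{-1}(V)$ with $V$ a sufficiently small disk about $t_0$.

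For $(1)\Rightarrow(2)$, the plan is to combine a rank bound with a uniformization argument. First, by lower semicontinuity of the rank of a holomorphic map, if $\operatorname{rank}(dH_{x_0})\ge 2$ at some point $x_0$, then the rank stays $\ge 2$ on a neighborhood $V$ of $x_0$; restricting $H$ to a two-dimensional complex affine slice through $x_0$ on which $dH_{x_0}$ has rank $2$ and applying the constant-rank theorem on the open dense subset where the rank is locally maximal, one finds that $H(V)$ contains an open piece of a two-dimensional complex submanifold of $\mathbb C^m$, contradicting $H(V)\subset\Sigma_{x_0}$ with $\dim\Sigma_{x_0}\le 1$ once $V$ is small enough. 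Hence $\operatorname{rank}(dH_x)\le 1$ for every $x\in\mathbb D^n$. Assuming $H$ is non-constant (otherwise take $h\equiv 0$ and $\widetilde H$ constant), the rank bound and the local factoring together show that $H(\mathbb D^n)$ is, locally near each of its points, a one-dimensional analytic subset of $\mathbb C^m$. The central step is then a Stein-type factorization: construct a connected Riemann surface $Y$, a holomorphic surjection $h_0\colon\mathbb D^n\to Y$ with connected fibers, and a holomorphic $g\colon Y\to\mathbb C^m$ with $H=g\circ h_0$. I would obtain $Y$ by normalizing the local image variety of $H$ and lifting $H$ through the normalization, which is possible because $\mathbb D^n$ is smooth.

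Once such $Y$ is in hand, the remainder is classical. The bounded non-constant function $H_1\in H^\infty(\mathbb D^n)$ descends through $h_0$ to a bounded non-constant holomorphic function on $Y$, so by the uniformization theorem the universal cover $\widetilde Y$ of $Y$ is neither $\mathbb C$ nor $\mathbb C\mathbb P^1$ (both admit only constant bounded holomorphic functions, by Liouville), leaving $\widetilde Y=\mathbb D$. Since $\mathbb D^n$ is simply connected, $h_0$ lifts through the universal covering $p\colon\mathbb D\to Y$ to a holomorphic $h\colon\mathbb D^n\to\mathbb D$, and setting $\widetilde H:=g\circ p\colon\mathbb D\to\mathbb C^m$ yields $H=\widetilde H\circ h$, as required. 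The main obstacle is the Stein-type factorization producing $Y$: because $H$ is not proper, the classical Stein factorization does not apply directly, so one must argue via the normalization of the (possibly non-closed) local image variety, or alternatively construct $Y$ as the leaf space of the codimension-one singular holomorphic foliation of $\mathbb D^n$ given by level sets of $H$, checking that the resulting quotient carries a bona fide one-dimensional complex-analytic structure through which $H$ factors with connected fibers.
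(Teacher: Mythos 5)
Your overall strategy for $(1)\Rightarrow(2)$ is the same as the paper's: normalize the local image variety, lift $H$ through the normalization to produce a Riemann surface, then uniformize and lift through the universal cover using simple connectivity of $\mathbb D^n$. The $(2)\Rightarrow(1)$ sketch is also morally the paper's argument, just with less explicit local coordinates (the paper reduces $\widetilde H$ near $\zeta_0$ to the form $\zeta\mapsto(\zeta^k,\zeta^k\psi_2,\dots,\zeta^k\psi_m)$ and then uses the resulting finite branched covering of a punctured disk to exhibit $\widetilde H(W_0)$ as an analytic set).

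However, there is a genuine gap in $(1)\Rightarrow(2)$, and you in fact flag it yourself: the construction of the global Riemann surface $Y$ is left as an identified ``main obstacle'' rather than actually carried out. The paper resolves this step concretely: it exhausts $\mathbb D^n$ by compact polydisks $\overline{\mathbb D}^{\,n}_{r_j}$, uses compactness to extract finite subcovers from the given $U_x$'s so that $H(\mathbb D^n_{r_j})\subset\Sigma_j:=\Sigma_{x_1}\cup\dots\cup\Sigma_{x_N}$, normalizes each $\Sigma_j$ and lifts $H$ (by the universal property, since $\mathbb D^n_{r_j}$ is smooth) to a map with connected open image $R_j\subset\Sigma_j^\nu$, then exploits the \emph{uniqueness} of normalization factorizations to obtain a compatible direct system $\{R_j,\varphi_{ij}\}$ whose colimit $R=\varinjlim R_j$ is a connected Riemann surface carrying the factorization $H=\widetilde H_0\circ h_0$. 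Neither of your proposed alternatives (normalizing a ``possibly non-closed local image variety'' directly, or passing to a leaf space of the level-set foliation) is developed far enough to count as a proof; the leaf-space suggestion in particular has its own well-known Hausdorffness issues.

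Two smaller remarks. The rank-bound argument at the start of your $(1)\Rightarrow(2)$ is correct but unnecessary: hypothesis $(1)$ already asserts that $H$ locally lands in a one-dimensional analytic set, so there is nothing to derive, and the paper proceeds directly from this. And your appeal to ``the bounded non-constant function $H_1$'' is imprecise: $H$ being non-constant does not force $H_1$ to be non-constant. You need only that \emph{some} coordinate of $\widetilde H_0$ is non-constant, which suffices to conclude that $R$ (resp.\ your $Y$) admits a bounded non-constant holomorphic function and hence is hyperbolic.
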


\begin{proof}
We assume that \(H\) is nonconstant; otherwise both statements are trivial.

\smallskip\noindent
\emph{(1) \(\Rightarrow\) (2).}
Choose radii \(0<r_1<r_2<\cdots\nearrow1\) and consider the exhaustion of
\(\mathbb D^n\) by open polydisks \(\mathbb D_{r_j}^n\) of radii \(r_j\).
By compactness of the closure \(\overline{\mathbb D}_{r_j}^{\,n}\), there exist
\(x_1,\dots,x_N\) with
\[
\overline{\mathbb D}_{r_j}^{\,n}
\subset U_{x_1}\cup\cdots\cup U_{x_N},
\]
hence
\[
H(\mathbb D_{r_j}^n)\subset
\Sigma_j:=\Sigma_{x_1}\cup\cdots\cup\Sigma_{x_N},
\]
a one-dimensional analytic set.

Let \(\nu_j:\Sigma_j^\nu\to\Sigma_j\) be the normalization
(\cite[Ch.~8]{GrauertRemmert1984}).  
By its universal property, the holomorphic map
\(H|_{\mathbb D_{r_j}^n}\) lifts to a holomorphic map
\[
H_j^\nu:\mathbb D_{r_j}^n \longrightarrow \Sigma_j^\nu,
\qquad
H = \nu_j \circ H_j^\nu.
\]
Since \(H_j^\nu\) is nonconstant and \(\mathbb D_{r_j}^n\) is connected,
the open mapping theorem implies that
\[
R_j := H_j^\nu(\mathbb D_{r_j}^n)
\]
is a connected open Riemann surface.

Set
\[
h_j:=H_j^\nu:\mathbb D_{r_j}^n\to R_j,\qquad
\widetilde H_j:=\nu_j|_{R_j}:R_j\to\mathbb C^m,
\]
so \(H|_{\mathbb D_{r_j}^n}=\widetilde H_j\circ h_j\).
If \(r_i<r_j\), uniqueness of normalization factorizations yields a unique
holomorphic map \(\varphi_{ij}:R_i\to R_j\) such that
\(h_j=\varphi_{ij}\circ h_i\) on \(\mathbb D_{r_i}^n\).
Thus \(\{R_j,\varphi_{ij}\}\) forms a direct system of Riemann surfaces.

\smallskip
\textit{Formation of the limit surface.}
Define the inductive limit
\[
R:=\varinjlim R_j.
\]
Each \(\varphi_{ij}\) is a local biholomorphism on the relevant component of
\(R_i\), so the complex structures are compatible under pullback.  
Hence \(R\) becomes a Hausdorff one-dimensional complex manifold.
Since each \(R_j\) is connected and the overlaps
\(\varphi_{ij}(R_i)\cap R_j\) are nonempty open subsets,
the union \(R = \bigcup_j R_j\) is connected, and hence \(R\) is a  connected Riemann surface.

Since the factorizations are compatible,
the \(h_j\) glue to a holomorphic map
\[
h_0:\mathbb D^n\to R,
\]
and the \(\widetilde H_j\) glue to a holomorphic map
\[
\widetilde H_0:R\to\mathbb C^m,
\]
yielding
\[
H=\widetilde H_0\circ h_0.
\]

\smallskip
\textit{Uniformization.}
As \(\widetilde H_0\) is bounded and nonconstant,  \(R\)
admits a nonconstant bounded holomorphic function, hence \(R\) is hyperbolic.
Thus its universal cover is \(\mathbb D\).  
Let \(p:\mathbb D\to R\) be the covering map.  
Since \(\mathbb D^n\) is simply connected,
the covering homotopy theorem implies the existence of a holomorphic map
\[
h:\mathbb D^n\to\mathbb D,\qquad h_0=p\circ h,
\]
and setting \(\widetilde H:=\widetilde H_0\circ p\) gives
\(H=\widetilde H\circ h\).

\smallskip\noindent
\emph{(2) \(\Rightarrow\) (1).}
Assume \(H = \widetilde H \circ h\) with
\(h:\mathbb D^n\to\mathbb D\) and 
\(\widetilde H:\mathbb D\to\mathbb C^m\) nonconstant holomorphic maps.
Fix \(x \in \mathbb D^n\), set \(\zeta_0 := h(x)\), \(y_0 := H(x)\).

Since \(h\) is nonconstant, the open mapping theorem implies that
\(h(\mathbb D^n)\) is an open subset of \(\mathbb D\).
Hence we can choose an open disk \(W_0 \subset h(\mathbb D^n)\) centered at \(\zeta_0\)
and set
\[
W := W_0 - \zeta_0 = \{\zeta - \zeta_0 : \zeta \in W_0\},
\]
so that \(W\) is an open disk centered at \(0\).

Define
\[
\varphi(\zeta) := \widetilde H(\zeta + \zeta_0) - y_0,
\qquad \zeta \in W,
\]
so that \(\varphi(0) = 0\). 
Write \(\varphi = (\varphi_1,\dots,\varphi_m)\) and set
\[
k := \min_{1\le j\le m} \operatorname{ord}_{0}\varphi_j < \infty.
\]
By permuting the coordinates in \(\mathbb C^m\), if necessary, and precomposing
\(\varphi\) with a biholomorphic change of variable on a smaller disk \(W'\subset W\)
fixing \(0\) (and then renaming \(W'\) by \(W\)), we may assume without loss
of generality that
\[
\varphi_1(\zeta) = \zeta^k,\qquad
\varphi_j(\zeta) = \zeta^k \psi_j(\zeta),\quad j\ge2,\ \ \zeta\in W,
\]
with each \(\psi_j\) holomorphic near \(0\).
Thus
\[
\varphi(\zeta)
= \bigl(\zeta^k,\,\zeta^k\psi_2(\zeta),\dots,\zeta^k\psi_m(\zeta)\bigr),
\qquad \zeta\in W.
\]

Consider the projection onto the first coordinate,
\[
\pi_1 : \mathbb C^m \to \mathbb C,\qquad \pi_1(z_1,\dots,z_m)=z_1.
\]
Then for all \(\zeta\in W\),
\[
\pi_1\bigl(\varphi(\zeta)\bigr) = \zeta^k.
\]
Hence for some \(r>0\),
\[
\pi_1\circ\varphi : W\setminus\{0\} \longrightarrow \{w\in\mathbb C : 0<|w|<r\}
\]
is a finite unbranched covering of degree \(k\). Consequently,
\[
\widetilde H(W_0)=\varphi(W) + y_0
\]
is a one-dimensional complex analytic subset of a neighborhood
of \(y_0\in\mathbb C^m\).

Finally, let \(U_x := h^{-1}(W_0)\). Then \(U_x\) is a neighborhood of \(x\)
and
\[
H(U_x) = \widetilde H(h(U_x)) = \widetilde H(W_0),
\]
establishing \textup{(1)}.
\end{proof}

\medskip

\noindent\textit{Proof of Theorem~\ref{te2.5}.}
We must show that whenever \(g_1,\dots,g_k\in A(S_n;H)\), with \(H\) as in the
hypotheses of the theorem, satisfy the corona condition
\[
\inf_{z\in\mathbb D^n}\ \max_{1\le i\le k}|g_i(z)| \ge \delta>0,
\]
then for every \(h_1,\dots,h_k\in H^\infty(\mathbb D^n)\) with
\(\max_i\lVert h_i\rVert_\infty<\delta\), the ideal generated by
\[
F_i := g_i + h_i,\qquad i=1,\dots,k,
\]
is the whole algebra \(H^\infty(\mathbb D^n)\).

Suppose, to the contrary, that the ideal generated by
\(F_1,\dots,F_k\) is proper.  
Then there exists a maximal ideal 
\(\chi\in M\bigl(H^\infty(\mathbb D^n)\bigr)\) containing it; hence
\[
\widehat F_i(\chi)=0,\qquad i=1,\dots,k.
\]

Let
\[
\pi_n:M\bigl(H^\infty(\mathbb D^n)\bigr)\longrightarrow 
       \bigl[M\bigl(H^\infty(\mathbb D)\bigr)\bigr]^n
\]
be the canonical projection introduced in \eqref{e2.2}, and set 
\(\xi:=\pi_n(\chi)\).
The associated fiber is
\[
F_\xi := \pi_n^{-1}(\xi)\cap\operatorname{cl}(\mathbb D^n).
\]
Let \(A(F_\xi)\) denote the uniform closure in \(C(F_\xi)\) of the
restrictions of \(H^\infty(\mathbb D^n)\) to \(F_\xi\).
By the first assertion of Theorem~\ref{teo2.2} we have
\[
\pi_n^{-1}(\xi)=M\bigl(A(F_\xi)\bigr),
\]
so, in particular, \(\chi\in M\bigl(A(F_\xi)\bigr)\).
We therefore regard \(\chi\) as a character of \(A(F_\xi)\), and continue to
denote it by \(\chi\).

By Lemma~\ref{lem6.1}, there exist holomorphic maps
\(h:\mathbb D^n\to\mathbb D\) and \(\widetilde H:\mathbb D\to\mathbb C^m\)
such that \(H=\widetilde H\circ h\). Since \(H\) is bounded on \(\mathbb D^n\),
it follows that \(\widetilde H\) is bounded on \(\mathbb D\).

The map \(h\) induces a homomorphism
\[
h^*:H^\infty(\mathbb D)\longrightarrow H^\infty(\mathbb D^n),\qquad 
\varphi\longmapsto \varphi\circ h,
\]
whose transpose determines a continuous map of maximal ideal spaces
\[
\hat h^*:M\bigl(H^\infty(\mathbb D^n)\bigr)\longrightarrow
          M\bigl(H^\infty(\mathbb D)\bigr).
\]

Set
\[
K := \hat h^*(F_\xi)\subset M\bigl(H^\infty(\mathbb D)\bigr).
\]

Since the Gelfand transform of every function in \(S_n(H^\infty(\mathbb D))\)
is constant on the fibres of \(\pi_n\), the restriction of each \(\hat g_i\)
to \(F_\xi\) is the uniform limit in \(C(F_\xi)\) of the restrictions to \(F_\xi\)
of Gelfand transforms of a sequence of holomorphic polynomials in the coordinate
functions of \(H\), that is, polynomials in \(H_1,\dots,H_m\). Using the factorization \(H=\widetilde H\circ h\), each such polynomial in
\(H_1,\dots,H_m\) can be written as a polynomial in the coordinate functions of
\(\widetilde H\) composed with \(h\).  Hence its Gelfand transform descends,
via \(\hat h^*\), to the Gelfand transform of a holomorphic polynomial on
\(\mathbb D\) in the coordinates of \(\widetilde H\).

Therefore the restriction of each \(\hat g_i\) to \(F_\xi\) factors through
\(\hat h^*\); that is, there exist continuous functions
\(\widetilde g_i\in C(K)\) such that
\[
\hat g_i = \widetilde g_i \circ \hat h^*
\quad\text{on }\  F_\xi,\quad i=1,\dots,k.
\]
Moreover, each \(\widetilde g_i\) is the uniform limit on \(K\) of Gelfand 
transforms of functions from \(H^\infty(\mathbb D)\).

Let \(\mathcal A(K)\) denote the uniform closure on \(K\) of the algebra of 
restrictions of continuous functions defined on open neighbourhoods \(U\) of \(K\) 
whose restrictions to \(U\cap\mathbb D\) are holomorphic.  
By results of Suárez 
(\cite[Th.~2.4 and Lem.~1.1]{Suarez1998} together with \cite[Th.~2.4]{Suarez1994}), 
the maximal ideal space of \(\mathcal A(K)\) is canonically identified with \(K\).

The restriction of \(\hat h^*\) to \(F_\xi\) induces a pullback homomorphism
\[
\Phi:\mathcal A(K)\longrightarrow A(F_\xi),\qquad 
f\longmapsto f\circ\hat h^*\big|_{F_\xi}.
\]
Indeed, if \(U\) is an open neighbourhood of \(K\) and \(f\) is continuous on \(U\)
with \(f|_{U\cap\mathbb D}\) holomorphic, then 
\((\hat h^*)^{-1}(U)\) is an open neighbourhood of \(F_\xi\) in 
\(M\bigl(H^\infty(\mathbb D^n)\bigr)\), and
\(f\circ\hat h^*\) is continuous on this neighbourhood and holomorphic on its
intersection with \(\mathbb D^n\) (since \(\hat h^*(z)=h(z)\) for \(z\in\mathbb D^n\)).  
Hence the restriction of \(f\circ\hat h^*\) to \(F_\xi\) lies in \(A(F_\xi)\) 
(cf. the definition in Section~\ref{main} after \eqref{e2.3}).  
Since \(\mathcal A(K)\) is the uniform closure of such functions \(f\) on \(K\), 
it follows that \(\Phi(\mathcal A(K))\subset A(F_\xi)\).

The transpose of \(\Phi\),
\[
\Phi^*:M(A(F_\xi))\longrightarrow M(\mathcal A(K))\cong K,
\]
is therefore well defined and continuous.  Moreover, \(\Phi^*\) coincides with the 
restriction of \(\hat h^*\) to \(M(A(F_\xi))=\pi_n^{-1}(\xi)\).  
Indeed, for any \(f\in H^\infty(\mathbb D)\),
\[
\Phi(\hat f|_K)
   = \hat f\circ\hat h^*\big|_{\pi_n^{-1}(\xi)}
   = \widehat{f\circ h}\big|_{\pi_n^{-1}(\xi)}
   = \widehat{h^*(f)}\big|_{\pi_n^{-1}(\xi)},
\]
and evaluation at \(\chi\in M(A(F_\xi))\) gives
\[
\Phi^*(\chi)(\hat f|_K)
   = \chi\bigl(\widehat{h^*(f)}\bigr)
   = \hat f\bigl(\hat h^*(\chi)\bigr),
\]
which means precisely that \(\Phi^*(\chi)=\hat h^*(\chi)\).
Thus we may identify \(\Phi^*\) with the restriction of \(\hat h^*\), and we
continue to denote it by
\[
\hat h^*:M(A(F_\xi))\longrightarrow K.
\]

In particular, for our chosen \(\chi\in M(A(F_\xi))\),
\[
\eta:=\hat h^*(\chi)\in K,
\qquad
\hat g_i(\chi)=\widetilde g_i(\eta),\quad i=1,\dots,k.
\]

The corona condition now gives
\[
\max_{1\le i\le k}|\widetilde g_i(\eta)|
 =\max_{1\le i\le k}|\hat g_i(\chi)|
 \,\ge\,\delta.
\]
Hence for some index \(i_0\),
\[
|\hat g_{i_0}(\chi)|\ge\delta.
\]
On the other hand,
\[
|\hat h_{i_0}(\chi)|\le\|h_{i_0}\|_\infty<\delta.
\]
Therefore
\[
|\hat F_{i_0}(\chi)|
 =|\hat g_{i_0}(\chi)+\hat h_{i_0}(\chi)|
 \ge |\hat g_{i_0}(\chi)|-|\hat h_{i_0}(\chi)|
 >\delta-\delta=0,
\]
which contradicts \(\hat F_{i_0}(\chi)=0\).

Thus no maximal ideal of \(H^\infty(\mathbb D^n)\) can contain the ideal generated
by \(F_1,\dots,F_k\), and hence this ideal is all of
\(H^\infty(\mathbb D^n)\).  
The B\'ezout equation for \(F_1,\dots,F_k\) therefore has a solution in
\(H^\infty(\mathbb D^n)\), completing the proof.
\qedhere


\begin{thebibliography}{99}

\bibitem{Alling1964}
N.~L. Alling,
A proof of the corona conjecture for finite open Riemann surfaces,
\emph{Bull. Amer. Math. Soc.} \textbf{70} (1964), 110--112.

\bibitem{Alling1965}
N.~L. Alling,
Extensions of meromorphic function rings over non-compact Riemann surfaces, I,
\emph{Math. Z.} \textbf{89} (1965), 273--299.

\bibitem{BarrettDiller1998}
D.~E. Barrett and J.~Diller,
A new construction of Riemann surfaces with corona,
\emph{J. Geom. Anal.} \textbf{8} (1998), 341--347.

\bibitem{Be}
M.~Behrens,
\emph{The corona conjecture for a class of infinitely connected domains},
Bull. Amer. Math. Soc. \textbf{76} (1970), no.~2, 387--391.

\bibitem{Behrens1970}
M.~Behrens,
On the corona problem for a class of infinitely connected domains,
\emph{Bull. Amer. Math. Soc.} \textbf{76} (1970), 387--391.

\bibitem{Behrens1971}
M.~Behrens,
The maximal ideal space of algebras of bounded analytic functions on infinitely connected domains,
\emph{Trans. Amer. Math. Soc.} \textbf{161} (1971), 358--380.

\bibitem{BourgainReinov1985}
J.~Bourgain and O.~Reinov,
\emph{On the approximation properties for the space \(H^\infty\)},
Math. Nachr. \textbf{122} (1985), 19--27.

\bibitem{Brudnyi2013}
A.~Brudnyi,
Banach-valued holomorphic functions on the maximal ideal space of \(H^\infty\),
\emph{Invent. Math.} \textbf{193} (2013), 187--227.



\bibitem{Brudnyi2021}
A.~Brudnyi,
\newblock Structure of the maximal ideal space of \( H^\infty \) on the countable disjoint union of open disks,
\newblock \emph{St. Petersburg J. Math.} \textbf{32} (2021), no.~6, 999--1009.

\bibitem{Brudnyi2023}
A.~Brudnyi,
\newblock The corona problem,
\newblock In: J.~Mashreghi (ed.), {\em Lectures on Analytic Function Spaces and their Applications}, Springer Nature Switzerland, Cham, 2023, pp. 283--308, doi:10.1007/978-3-031-33572-3\_8.

\bibitem{Brudnyi2026}
A.~Brudnyi,
Runge-type approximation theorem for Banach-valued \( H^\infty \) functions on a polydisk,
\emph{J. Approx. Theory} \textbf{314} (2026), 106221.


\bibitem{Carleson1962}
L.~Carleson,
Interpolation of bounded analytic functions and the corona problem,
\emph{Ann. of Math.} \textbf{76} (1962), 547--559.

\bibitem{Deeb1977a}
W.~Deeb,
A class of infinitely connected domain and the corona,
\emph{Trans. Amer. Math. Soc.} \textbf{31} (1977), 101--106.

\bibitem{DeebWilken1977}
W.~Deeb and D.~R.~Wilken,
$\Delta$ domains and the corona,
\emph{Trans. Amer. Math. Soc.} \textbf{231} (1977), 107--115.

\bibitem{EarleMarden1968}
C.~J.~Earle and A.~Marden,
Projections to automorphic functions,
\emph{Proc. Amer. Math. Soc.} \textbf{19} (1968), 274--278.

\bibitem{Engelking1989}
R.~Engelking, 
\newblock \emph{General Topology}, 
\newblock Sigma Series in Pure Mathematics, Vol.~6, Heldermann Verlag, Berlin, 1989.


\bibitem{Forelli1966}
F.~Forelli,
Bounded holomorphic functions and projections,
\emph{Illinois J. Math.} \textbf{10} (1966), 367--380.

\bibitem{Gamelin1978}
T.~W.~Gamelin,
\emph{Uniform algebras and Jensen measures},
London Math. Soc. Lecture Notes Series, Vol.~32,
Cambridge Univ. Press, Cambridge--New York, 1978.

\bibitem{Gamelin1970}
T.~W.~Gamelin,
Localization of the corona problem,
\emph{Pacific J. Math.} \textbf{34} (1970), 73--81.

\bibitem{Garnett1981}
J.~B.~Garnett,
\emph{Bounded Analytic Functions},
Academic Press, 1981.

\bibitem{GarnettJones1985}
J.~B.~Garnett and P.~W.~Jones,
The corona theorem for Denjoy domains,
\emph{Acta Math.} \textbf{155} (1985), 27--40.

\bibitem{GrauertRemmert1984}
H.~Grauert and R.~Remmert,
\emph{Coherent Analytic Sheaves},
Grundlehren der Mathematischen Wissenschaften, vol.~265,
Springer, Berlin, 1984.


\bibitem{Handy2009}
J.~Handy,
The corona theorem on the complement of certain square Cantor sets,
\emph{J. Anal. Math.} \textbf{108}(1) (2009), 1--18.

\bibitem{HaraNakay1985}
M.~Hara and M.~Nakay,
Corona theorem with bounds for finitely sheeted disks,
\emph{Tohoku Math. J.} \textbf{37} (1985), 225--240.


\bibitem{Hoffman1962}
K.~Hoffman,
\emph{Analytic functions and logmodular Banach algebras},
Acta Math. \textbf{108} (1962), 271--317.

\bibitem{Hoffman1967}
K.~Hoffman,
\newblock Bounded analytic functions and Gleason parts,
\newblock {\em Ann. of Math.}, {\bf 86} (1967), 74--111.

\bibitem{Hormander1967}
L.~H\"{o}rmander,
Generators for some rings of analytic functions,
\emph{Bull. Amer. Math. Soc.} \textbf{73} (1967), 943--949.


\bibitem{Larusson2000}
F.~L\'{a}russon,
Holomorphic functions of slow growth on nested covering spaces of compact manifolds,
\emph{Canad. J. Math.} \textbf{52} (2000), 982--998.

\bibitem{Lindenstrauss1976}
J.~Lindenstrauss,
\emph{Some open problems in Banach space theory},
S\'eminaire Choquet, Initiation \`a l'analyse, 15 (1975--1976), Exp.~No.~18, 1--9.

\bibitem{Moore1987}
C.~N. Moore,
\newblock The corona theorem for domains whose boundary lies in a \(C^{1+\varepsilon}\) curve,
\newblock {\em Proc. Amer. Math. Soc.}, 100 (1987), 266--270.


\bibitem{Nakay1983}
M.~Nakay,
The corona problem on finitely sheeted covering surfaces,
\emph{Nagoya Math. J.} \textbf{92} (1983), 163--173.

\bibitem{Narita1985}
J.~Narita,
A remark on the corona problem for plane domains,
\emph{J. Math. Kyoto Univ.} \textbf{25} (1985), 293--298.

\bibitem{NewDelman2011}
B.~M. NewDelman,
Homogeneous subsets of a Lipschitz graph and the Corona theorem,
\emph{Publ. Mat.} \textbf{55} (2011), 93--121.

\bibitem{Newman1959}
D.~J.~Newman,
Interpolation in \(H^\infty(\mathbb D)\),
\emph{Trans. Amer. Math. Soc.} \textbf{92} (1959), no.~2, 502--505.

\bibitem{Oh2008}
B.~Oh,
A short proof of Hara and Nakai's theorem,
\emph{Proc. Amer. Math. Soc.} \textbf{136} (2008), 2851--2853.

\bibitem{St1}
E.~L. Stout,
Two theorems concerning functions holomorphic on multiply connected domains,
\emph{Bull. Amer. Math. Soc.} \textbf{69} (1963), 527--530.

\bibitem{St2}
E.~L. Stout,
Bounded holomorphic functions on finite Riemann surfaces,
\emph{Trans. Amer. Math. Soc.} \textbf{120} (1965), 255--285.

\bibitem{St3}
E.~L. Stout,
On some algebras of analytic function on finite open Riemann surfaces,
\emph{Math. Z.} \textbf{92} (1966), 366--379. 
Corrections: \emph{Math. Z.} \textbf{95} (1967), 403--404.

\bibitem{Suarez1994}
D.~Suárez,
\newblock Čech cohomology and covering dimension for the \( H^\infty \) maximal ideal space,
\newblock \emph{J. Funct. Anal.} \textbf{123} (1994), no.~1, 233–263.

\bibitem{Suarez1996}
D.~Su\'arez, \newblock Trivial Gleason parts and the topological stable rank of \( H^\infty \), 
\newblock \emph{Amer. J. Math.} \textbf{118} (1996), no.~4, 879--904.

\bibitem{Suarez1998}
D.~Su\'arez,
Approximation by ratios of bounded analytic functions,
\emph{J. Funct. Anal.} \textbf{160} (1998), 254--269.

\bibitem{Wolff1980}
T.~Wolff,
A simple proof of the corona theorem for \(H^\infty\) on the unit disk,
\emph{Proc. Amer. Math. Soc.} \textbf{79} (1980), no.~2, 347--350.

\end{thebibliography}
\end{document}